\newcommand{\cml}{complemented modular lattice}
\newcommand{\scml}{sectionally complemented modular lattice}
\newcommand{\Ban}{Ba\-na\-schew\-ski}
\numberwithin{equation}{section}
\newcommand{\pup}[1]{\textup{(}#1\textup{)}}
\theoremstyle{plain}
\newtheorem{lemma}{Lemma}[section]
\newtheorem{theorem}[lemma]{Theorem}
\newtheorem{proposition}[lemma]{Proposition}
\newtheorem{corollary}[lemma]{Corollary}
\newtheorem{claim}{Claim}
\newtheorem*{sclaim}{Claim}
\newtheorem*{stat}{\name}
\newcommand{\name}{testing}
\theoremstyle{definition}
\newtheorem{definition}[lemma]{Definition}
\theoremstyle{remark}
\newtheorem{remark}[lemma]{Remark}
\newenvironment{all}[1]{\renewcommand{\name}{#1}\begin{stat}}
                        {\end{stat}}
\newcommand{\qedc}{{\qed}~{\rm Claim~{\theclaim}.}}
\newcommand{\qedsc}{{\qed}~{\rm Claim.}}
\newenvironment{cproof}
{\begin{proof}[Proof of Claim.]}
{\qedc\renewcommand{\qed}{}\end{proof}}
\newenvironment{scproof}
{\begin{proof}[Proof of Claim.]}
{\qedsc\renewcommand{\qed}{}\end{proof}}
\newcommand{\cA}{\mathcal{A}}
\newcommand{\cB}{\mathcal{B}}
\newcommand{\cL}{\mathcal{L}}
\newcommand{\cS}{\mathcal{S}}
\newcommand{\cR}{\mathcal{R}}
\newcommand{\cT}{\mathcal{T}}
\newcommand{\cW}{\mathcal{W}}
\newcommand{\cX}{\mathcal{X}}
\newcommand{\cZ}{\mathcal{Z}}
\newcommand{\bde}{\boldsymbol{e}}
\newcommand{\bdX}{\boldsymbol{X}}
\newcommand{\xF}{\mathbf{F}}
\newcommand{\id}{\mathrm{id}}
\newcommand{\dnw}{\mathbin{\downarrow}}
\newcommand{\upw}{\mathbin{\uparrow}}
\newcommand{\res}{\mathbin{\restriction}}
\newcommand{\eps}{\varepsilon}
\newcommand{\talpha}{\tilde{\alpha}}
\newcommand{\tbeta}{\tilde{\beta}}
\newcommand{\tgamma}{\tilde{\gamma}}
\newcommand{\txi}{\tilde{\xi}}
\newcommand{\teta}{\tilde{\eta}}
\newcommand{\Reg}{\mathbf{Reg}}
\newcommand{\Ch}{\mathbf{Ch}}
\newcommand{\xV}{\mathbf{V}}
\newcommand{\zb}{0^{\mathrm{b}}}
\newcommand{\ob}{1^{\mathrm{b}}}
\DeclareMathOperator{\card}{card}
\DeclareMathOperator{\supp}{supp}
\newcommand{\ompr}{\mathbin{\ominus'}}
\newcommand{\set}[1]{\{{#1}\}}
\newcommand{\setm}[2]{\set{{#1}\mid{#2}}}
\newcommand{\famm}[2]{({#1}\mid{#2})}
\newcommand{\onto}{\twoheadrightarrow}
\newcommand{\mono}{\rightarrowtail}
\newcommand{\ol}[1]{\overline{#1}}
\newcommand{\sa}{\mathsf{a}}
\renewcommand{\sc}{\mathsf{c}}
\newcommand{\se}{\mathsf{e}}
\newcommand{\sk}{\mathsf{k}}
\newcommand{\sm}{\mathsf{m}}
\newcommand{\sx}{\mathsf{x}}
\newcommand{\sy}{\mathsf{y}}
\newcommand{\st}{\mathsf{t}}
\newcommand{\su}{\mathsf{u}}
\newcommand{\sI}{\underline{\mathsf{I}}}
\DeclareMathOperator{\Id}{Id}
\newcommand{\FF}{\mathbb{F}}
\newcommand{\LL}{\mathbb{L}}
\DeclareMathOperator{\Idemp}{Idemp}
\newcommand{\utr}{\trianglelefteq}
\author[F.~Wehrung]{Friedrich Wehrung}
\address{LMNO, CNRS UMR 6139\\
D\'epartement de Math\'ematiques, BP 5186\\
Universit\'e de Caen, Campus 2\\
14032 Caen cedex\\
France}
\email{wehrung@math.unicaen.fr}
\urladdr{http://www.math.unicaen.fr/\~{}wehrung}
\subjclass[2000]{06C20, 06C05, 03C20, 16E50}
\keywords{lattice; complemented; sectionally complemented; modular; coordinatizable; frame; entire; neutral; ideal; Banaschewski function; Banaschewski measure; ring; von~Neumann regular; idempotent; larder; Condensate Lifting Lemma}
\date{\today}
\begin{document}

\title[A non-coordinatizable lattice]{A non-coordinatizable sectionally complemented modular lattice with a large J\'onsson four-frame}

\begin{abstract}
A \scml~$L$ is \emph{coordinatizable} if it is isomorphic to the lattice~$\LL(R)$ of all principal right ideals of a von~Neumann regular (not necessarily unital) ring~$R$. We say that~$L$ has a \emph{large $4$-frame} if it has a homogeneous sequence $(a_0,a_1,a_2,a_3)$ such that the neutral ideal generated by~$a_0$ is~$L$. J\'onsson proved in 1962 that if~$L$ has a countable cofinal sequence and a large $4$-frame, then it is coordinatizable; whether the cofinal sequence assumption could be dispensed with was left open. We solve this problem by finding a non-coordinatizable \scml~$L$ with a large $4$-frame; it has cardinality~$\aleph_1$. Furthermore, $L$ is an ideal in a \cml~$L'$ with a spanning $5$-frame (in particular, $L'$ is coordinatizable).

Our proof uses \emph{\Ban\ functions}. A \Ban\ function on a bounded lattice~$L$ is an antitone self-map of~$L$ that picks a complement for each element of~$L$. In an earlier paper, we proved that every \emph{countable} \cml\ has a \Ban\ function. We prove that there exists a unit-regular ring~$R$ of cardinality~$\aleph_1$ and index of nilpotence~$3$ such that~$\LL(R)$ has no \Ban\ function.
\end{abstract}

\maketitle

\section{Introduction}\label{S:Intro}
\subsection{History of the problem}\label{Su:Hist}
The set~$\LL(R)$ of all principal right ideals of a (not necessarily unital) von~Neumann regular ring~$R$, ordered by inclusion, is a sublattice of the lattice of all ideals of~$L$; hence it satisfies the \emph{modular law},
 \[
 X\supseteq Z\ \Longrightarrow\ X\cap(Y+Z)=(X\cap Y)+Z\,.
 \]
(Here~$+$ denotes the addition of ideals.)
Moreover, $\LL(R)$ is \emph{sectionally complemented}, that it, for all principal right ideals~$X$ and~$Y$ such that $X\subseteq Y$, there exists a principal right ideal~$Z$ such that $X\oplus Z=Y$. A lattice is \emph{coordinatizable} if it is isomorphic to~$\LL(R)$ for some von~Neumann regular ring~$R$. In particular, every coordinatizable lattice is sectionally complemented modular. (For precise definitions we refer the reader to Section~\ref{S:Basic}.) In his monograph~\cite{Neum}, John von~Neumann proved the following result:

\begin{all}{Von Neumann's Coordinatization Theorem}
Every \cml\ that admits a spanning $n$-frame, with $n\geq4$, is coordinatizable.
\end{all}

It is not hard to find non-coordinatizable \cml s. The easiest one to describe is the lattice~$M_7$ of length two with seven atoms. Although von Neumann's original proof is very long and technical (about 150 pages), its basic idea is fairly simple: namely, assume a sufficiently rich lattice-theoretical version of a coordinate system (the spanning $n$-frame, richness being measured by the condition~$n\geq4$) to carry over the ideas in projective geometry underlying the construction of ``von Staudt's algebra of throws'' that makes it possible to go from \emph{synthetic geometry} (geometry described by incidence axioms on ``flats'') to \emph{analytic geometry} (prove statements of geometry by using coordinates and algebra), see \cite[Section~IV.5]{GLT2}. Instead of constructing (a matrix ring over) a field, von~Neumann's method yields a regular ring.

A powerful generalization of von Neumann's Coordinatization Theorem was obtained by Bjarni J\'onsson in 1960, see~\cite{Jons60}:

\begin{all}{J\'onsson's Coordinatization Theorem}
Every \cml\ $L$ that admits a large $n$-frame, with $n\geq4$ \pup{or $n\geq3$ if~$L$ is \emph{Arguesian}}, is coordinatizable.
\end{all}

There have been many simplifications, mainly due to I. Halperin~\cite{Halp61,Halp81,Halp83}, of the proof of von~Neumann's Coordinatization Theorem. A substantial simplification of the proof of J\'onsson's Coordinatization Theorem has been achieved by Christian Herrmann~\cite{Herr}---\emph{assuming the basic Coordinatization Theorem for Projective Geometries}, and thus reducing most of the complicated lattice calculations of both von~Neumann's proof and J\'onsson's proof to linear algebra. Now the Coordinatization Theorem for Projective Geometries is traditionally credited to Hilbert and to Veblen and Young, however, it is unclear whether a complete proof was published before von~Neumann's breakthrough in~\cite{Neum}. A very interesting discussion of the matter can be found in Israel Halperin's review of J\'onsson's paper~\cite{Jons60}, cf. \textbf{MR}\,0120175 (22 \#10932).

On the other hand, there is in some sense no ``Ultimate Coordinatization Theorem'' for complemented modular lattices, as the author proved that there is no first-order axiomatization for the class of all coordinatizable lattices with unit~\cite{CXCoord}.

While Von~Neumann's sufficient condition for coordinatizability requires the lattice have a unit (a spanning $n$-frame joins, by definition, to the unit of the lattice), J\'onsson's sufficient condition leaves more room for improvement. While J\'onsson assumes a unit in his above-cited Coordinatization Theorem, a large $n$-frame does not imply the existence of a unit.

And indeed, J\'onsson published in~1962 a new Coordinatization Theorem~\cite{Jons62}, assuming a large $n$-frame where $n\geq4$, where the lattice~$L$ is no longer assumed to have a unit (it is still sectionally complemented)\dots \emph{but} where the conclusion is weakened to~$L$ being isomorphic to the lattice of all finitely generated submodules of some \emph{locally projective module} over a regular ring. He also proved that if~$L$ is countable, or, more generally, has a countable cofinal sequence, then, still under the existence of a large $n$-frame, it is coordinatizable. The question whether full coordinatizability could be reached in general was left open.

In the present paper we solve the latter problem, in the negative. Our counterexample is a non-coordinatizable \scml~$L$, of cardinality~$\aleph_1$, with a large $4$-frame. Furthermore, $L$ is isomorphic to an ideal in a \cml~$L'$ with a spanning $5$-frame (in particular, $L'$ is coordinatizable).

Although our counterexample is constructed explicitly, our road to it is quite indirect. It starts with a discovery made in 1957, by Bernhard Banaschewski~\cite{Bana}, that on every vector space~$V$, over an arbitrary division ring, there exists an \emph{order-reversing} (we say \emph{antitone}) map that sends any subspace~$X$ of~$V$ to a complement of~$X$ in~$V$. Such a function was then used in order to find a simple proof of Hahn's Embedding Theorem that states that every totally ordered abelian group embeds into a generalized lexicographic power of the reals.

\subsection{\Ban\ functions on lattices and rings}\label{S:Banasch}
By analogy with \Ban's result, we define a \emph{\Ban\ function} on a bounded lattice~$L$ as an antitone self-map of~$L$ that picks a complement for each element of~$L$ (Definition~\ref{D:BanLatt}).
Hence \Ban's above-mentioned result from~\cite{Bana} states that the subspace lattice of every vector space has a \Ban\ function. This result is extended to all \emph{geometric} (not necessarily modular) lattices in Saarim\"aki and Sorjonen~\cite{SaSo}.

We proved in~\cite[Theorem~4.1]{BanFct1} that \emph{Every countable \cml\ has a \Ban\ function}. In the present paper, we construct in Proposition~\ref{P:NoBan} a unital regular ring~$S_\FF$ such that~$\LL(S_\FF)$ has no \Ban\ function. The ring~$S_\FF$ has the optimal cardinality~$\aleph_1$. Furthermore, $S_\FF$ has index~$3$ (Proposition~\ref{P:Index3}); in particular, it is unit-regular.

The construction of the ring~$S_\FF$ involves a parameter~$\FF$, which is any \emph{countable} field, and~$S_\FF$ is a ``$\FF$-algebra with quasi-inversion defined by generators and relations'' in any large enough variety. Related structures have been considered in Goodearl, Menal, and Moncasi~\cite{GoMM} and in Herrmann and Semenova~\cite{HeSe}.

\subsection{{}From non-existence of \Ban\ functions to failure of coordinatizability}\label{Su:Ban2Coord}
As we are aiming to a counterexample to the above-mentioned problem on coordinatization, we prove in Theorem~\ref{T:NoBanMeas} a stronger negative result, namely the non-existence of any ``\Ban\ measure'' on a certain increasing $\omega_1$-sequence of elements in~$L$.

A modification of this example, based on the $5\times 5$ matrix ring over~$S_\FF$, yields (Lemma~\ref{L:NonLLLiftDiagr}) an $\omega_1$-increasing chain $\vec{A}=\famm{A_\xi}{\xi<\omega_1}$ of countable \scml s, all with the same large $4$-frame, that cannot be lifted, with respect to the~$\LL$ functor, by any $\omega_1$-chain of regular rings (Lemma~\ref{L:NonLLLiftDiagr}). Our final conclusion follows from a use of a general categorical result, called the \emph{Condensate Lifting Lemma} (CLL), introduced in a paper by Pierre Gillibert and the author~\cite{GiWe}, designed to relate liftings of \emph{diagrams} and liftings of \emph{objects}. Here, CLL will turn the \emph{diagram counterexample} of Lemma~\ref{L:NonLLLiftDiagr} to the \emph{object counterexample} of Theorem~\ref{T:4/5NonCoord}. This counterexample is a so-called \emph{condensate} of the diagram~$\vec{A}$ by a suitable ``$\omega_1$-scaled Boolean algebra''. It has cardinality~$\aleph_1$ (cf. Theorem~\ref{T:4/5NonCoord}). Furthermore, it is isomorphic to an ideal of a \cml\ $L'$ with a spanning $5$-frame (so~$L'$ is coordinatizable).

\section{Basic concepts}\label{S:Basic}

\subsection{Partially ordered sets and lattices}\label{Su:PosLat}
Let~$P$ be a partially ordered set.
We denote by~$0_P$ (respectively, $1_P$) the least element (respectively, largest element) of~$P$ when they exist, also called \emph{zero} (respectively, \emph{unit}) of~$P$, and we simply write~$0$ (respectively, $1$) in case~$P$ is understood. Furthermore, we set~$P^-:=P\setminus\set{0_P}$. We set
\begin{align*}
 U\dnw X&:=\setm{u\in U}{(\exists x\in X)(u\leq x)}\,,\\
 U\upw X&:=\setm{u\in U}{(\exists x\in X)(u\geq x)}\,,
 \end{align*}
for any subsets~$U$ and~$X$ of~$P$, and we set
$U\dnw x:=U\dnw\set{x}$, $U\upw x:=U\upw\set{x}$, for any~$x\in P$. We say that~$U$ is a \emph{lower subset} of~$P$ if $U=P\dnw U$. We say that~$P$ is \emph{upward directed} if every pair of elements of~$P$ is contained in~$P\dnw x$ for some~$x\in P$. We say that~$U$ is \emph{cofinal} in~$P$ if $P\dnw U=P$. We define $p^U$ the least element of~$U\upw p$ if it exists, and we define~$p_U$ dually, for each $p\in P$. An \emph{ideal} of~$P$ is a nonempty, upward directed, lower subset of~$P$. We set
 \[
 P^{[2]}:=\setm{(x,y)\in P\times P}{x\leq y}\,.
 \]
For subsets~$X$ and~$Y$ of~$P$, let~$X<Y$ hold if $x<y$ holds for all $(x,y)\in X\times Y$. We shall also write $X<p$ (respectively, $p<X$) instead of $X<\set{p}$ (respectively, $\set{p}<X$), for each $p\in P$.
For partially ordered sets~$P$ and~$Q$, a map $f\colon P\to Q$ is \emph{isotone} (\emph{antitone}, \emph{strictly isotone}, respectively) if $x<y$ implies that $f(x)\leq f(y)$ ($f(y)\leq f(x)$, $f(x)<f(y)$, respectively), for all $x,y\in P$.

We refer to Birkhoff~\cite{Birk94} or Gr\"atzer~\cite{GLT2} for basic notions of lattice theory. We recall here a sample of needed notation, terminology, and results. In any lattice~$L$ with zero, a family $\famm{a_i}{i\in I}$ is \emph{independent} if the equality
 \[
 \bigvee\famm{a_i}{i\in X}\wedge\bigvee\famm{a_i}{i\in Y}=
 \bigvee\famm{a_i}{i\in X\cap Y}
 \]
holds for all finite subsets~$X$ and~$Y$ of~$I$. In case~$L$ is modular and~$I=\set{0,1,\dots,n-1}$ for a positive integer~$n$, this amounts to verifying that $a_k\wedge\bigvee_{i<k}a_i=0$ for each $k<n$. We denote by~$\oplus$ the operation of finite independent sum in~$L$, so $a=\bigoplus\famm{a_i}{i\in I}$ means that~$I$ is finite, $\famm{a_i}{i\in I}$ is independent, and $a=\bigvee_{i<n}a_i$. If~$L$ is modular, then~$\oplus$ is both commutative and associative in the strongest possible sense for a partial operation, see \cite[Section~II.1]{Maed58}.

A lattice~$L$ with zero is \emph{sectionally complemented} if for all $a\leq b$ in~$L$ there exists~$x\in L$ such that $b=a\oplus x$. For elements $a,x,b\in L$, let $a\sim_xb$ (respectively, $a\lesssim_xb$) hold if $a\oplus x=b\oplus x$ (respectively, $a\oplus x\leq b\oplus x$). We say that~$a$ is \emph{perspective} (respectively, \emph{subperspective}) to~$b$, in notation~$a\sim b$ (respectively, $a\lesssim b$), if there exists~$x\in L$ such that $a\sim_xb$ (respectively, $a\lesssim_xb$).
We say that~$L$ is \emph{complemented} if it has a unit and every element~$a\in L$ has a \emph{complement}, that is, an element~$x\in L$ such that $1=a\oplus x$. A bounded modular lattice is complemented if and only if it is sectionally complemented.

An ideal~$I$ of a lattice~$L$ is \emph{neutral} if $\set{I,X,Y}$ generates a distributive sublattice of~$\Id L$ for all ideals~$X$ and~$Y$ of~$L$. In case~$L$ is sectionally complemented modular, this is equivalent to the statement that every element of~$L$ perspective to some element of~$I$ belongs to~$I$. In that case, the assignment that to a congruence~$\theta$ associates the $\theta$-block of~$0$ is an isomorphism from the congruence lattice of~$L$ onto the lattice of all neutral ideals of~$L$.

An independent finite sequence $\famm{a_i}{i<n}$ in a lattice~$L$ with zero is \emph{homogeneous} if the elements~$a_i$ are pairwise perspective. An element~$x\in L$ is \emph{large} if the neutral ideal generated by~$x$ is~$L$. A family $(\famm{a_i}{0\leq i<n},\famm{c_i}{1\leq i<n})$, with $\famm{a_i}{0\leq i<n}$ independent, is a
\begin{itemize}
\item\emph{$n$-frame} if $a_0\sim_{c_i}a_i$ for each~$i$ with $1\leq i<n$;

\item\emph{large $n$-frame} if it is an $n$-frame and~$a_0$ is large.

\item\emph{spanning $n$-frame} if it is a frame, $L$ has a unit, and $1=\bigoplus_{i<n}a_i$.
\end{itemize}
In a lattice with unit, every spanning~$n$-frame is large; the converse fails for trivial examples. A \emph{large partial $n$-frame} of a \cml, as defined in J\'onsson~\cite{Jons60}, consists of a large $n$-frame as defined above, together with a finite collection of elements of~$L$ joining to the unit of~$L$ and satisfying part of the relations defining frames, so that, for instance, all of them are subperspective to~$a_0$. In particular, in a \cml, the existence of a large partial $n$-frame (as defined by J\'onsson) is equivalent to the existence of a large $n$-frame (as defined here).

\begin{definition}\label{D:n/mentire}
Let~$m$ and~$n$ be positive integers with~$m\geq n$.
A modular lattice~$L$ with zero is \emph{$n/m$-entire} if~$L$ has an ideal~$I$ and a homogeneous sequence $\famm{a_i}{i<m}$ such that, setting $a:=\bigoplus_{i<n}a_i$,
\begin{enumerate}
\item each element $x\in I$ is a join of~$m-n$ elements subperspective to~$a_0$; furthermore, $x\wedge a=0$;

\item $\setm{a\vee x}{x\in I}$ is cofinal in~$L$.
\end{enumerate}
\end{definition}

Evidently, $L$ has a spanning $n$-frame if and only if it is $n/n$-entire. Furthermore, if~$L$ is $n/m$-entire, then it has a large~$n$-frame.

\subsection{Set theory}\label{Su:SetTh}
By ``countable'' we will always mean ``at most countable''. We denote by~$\omega$ the first infinite ordinal and we identify it with the set of all non-negative integers. More generally, any ordinal~$\alpha$ is identified with the set of all ordinals smaller than~$\alpha$. Cardinals are initial ordinals. For any ordinal~$\alpha$, we denote by~$\omega_\alpha$ the $\alpha$th infinite cardinal. Following the usual set-theoretical convention, we also denote it by~$\aleph_\alpha$ whenever we wish to view it as a cardinal in the ``naive'' sense.

\v{S}anin's classical \emph{$\Delta$-Lemma} (cf. \cite[Lemma~22.6]{Jech}) is the following.

\begin{all}{$\Delta$-Lemma}
Let~$\cW$ be an uncountable collection of finite sets. Then there are an uncountable subset~$\cZ$ of~$\cW$ and a set~$Z$ \pup{the \emph{root} of~$\cZ$} such that $X\cap Y=Z$ for all distinct~$X,Y\in\cZ$.
\end{all}

We shall require the following slight strengthening of the $\Delta$-Lemma.

\begin{lemma}\label{L:Deltaplus}
Let $C$ be an uncountable subset of~$\omega_1$ and let $\famm{S_\alpha}{\alpha\in C}$ be a family of finite subsets of~$\omega_1$. Then there are an uncountable subset~$W$ of~$C$ and a set~$Z$ such that
 \[
 S_\alpha\cap S_\beta=Z\quad\text{and}\quad
 Z<S_\alpha\setminus Z<S_\beta\setminus Z\text{ for all }\alpha<\beta
 \text{ in }W\,.
 \]
\end{lemma}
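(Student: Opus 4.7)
The plan is to apply \v{S}anin's $\Delta$-Lemma to obtain an uncountable subfamily with a common root $Z$, then thin it further by a transfinite recursion of length $\omega_1$ so that the pieces $S_{\alpha_\xi}\setminus Z$ sit in strictly increasing blocks above $Z$. First, I would dispose of the trivial case in which $\alpha\mapsto S_\alpha$ has an uncountable fibre, say $S_\alpha=S$ for uncountably many $\alpha\in C$: then $W$ equal to that fibre and $Z:=S$ satisfy the conclusion vacuously, since $S_\alpha\setminus Z=\es$ for each $\alpha\in W$. Otherwise I restrict to an uncountable $C_0\subseteq C$ on which $\alpha\mapsto S_\alpha$ is injective and invoke the $\Delta$-Lemma on $\setm{S_\alpha}{\alpha\in C_0}$ to extract an uncountable $C_1\subseteq C_0$ and a finite root $Z$ with $S_\alpha\cap S_\beta=Z$ for distinct $\alpha,\beta\in C_1$.

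The key observation is a strong disjointness: if $z\in\omega_1\setminus Z$ belonged to $S_\alpha$ for two distinct $\alpha\in C_1$, then $z$ would lie in $S_\alpha\cap S_\beta=Z$, a contradiction. Hence the sets $\famm{S_\alpha\setminus Z}{\alpha\in C_1}$ are pairwise disjoint, so for any countable $X\subseteq\omega_1$ only countably many $\alpha\in C_1$ have $S_\alpha$ meeting $X\setminus Z$. I then build $\famm{\alpha_\xi}{\xi<\omega_1}$ in $C_1$ recursively. At stage $\xi$, having chosen $\famm{\alpha_\eta}{\eta<\xi}$, let $\beta_\xi<\omega_1$ be a strict upper bound for the $\alpha_\eta$, and let $\mu_\xi$ be the supremum of the countable set $Z\cup\bigcup\setm{S_{\alpha_\eta}}{\eta<\xi}$, which is a countable ordinal. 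By the disjointness property applied to $(\mu_\xi+1)\setminus Z$, only countably many $\alpha\in C_1$ satisfy $S_\alpha\cap(\mu_\xi+1)\supsetneq Z$; an uncountable pool of $\alpha\in C_1$ therefore remains with $\alpha>\beta_\xi$ and $S_\alpha\cap(\mu_\xi+1)=Z$, from which I pick $\alpha_\xi$. Every element of $S_{\alpha_\xi}\setminus Z$ then exceeds $\mu_\xi$, hence every element of $Z$ and of $S_{\alpha_\eta}$ for $\eta<\xi$. Setting $W:=\setm{\alpha_\xi}{\xi<\omega_1}$ yields the desired uncountable subset.

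The main obstacle is precisely the recursive step: maintaining an uncountable pool of admissible $\alpha_\xi$ at every stage. Without the pairwise disjointness of the $S_\alpha\setminus Z$ one could easily imagine that finitely many ``greedy'' elements crowd the interval $[\max Z,\mu_\xi]$ uncountably often; it is exactly the $\Delta$-Lemma's guarantee $S_\alpha\cap S_\beta=Z$ that obstructs this and forces the complements $S_\alpha\setminus Z$ to distribute sparsely across~$\omega_1$, which is what allows the above-bound condition $S_\alpha\cap(\mu_\xi+1)=Z$ to be achieved by uncountably many~$\alpha$ simultaneously with~$\alpha>\beta_\xi$.
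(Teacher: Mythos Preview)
Your proof is correct and follows essentially the same route as the paper's: apply the $\Delta$-Lemma to obtain the root~$Z$, exploit the pairwise disjointness of the sets $S_\alpha\setminus Z$ to see that any countable block of ordinals can meet only countably many of them, and then run a transfinite recursion of length~$\omega_1$ to line up the remainders in increasing order. The paper packages the disjointness observation as a slightly stronger Claim (for each countable~$D$ there is a threshold~$\alpha$ such that \emph{every} $\eta\geq\alpha$ in~$C$ satisfies $D<S_\eta\setminus Z$), whereas you only use that uncountably many~$\alpha$ work at each stage; and you are more explicit than the paper about the injectivity of $\alpha\mapsto S_\alpha$ needed to invoke the $\Delta$-Lemma on an indexed family---but these are cosmetic differences, not a genuinely different argument.
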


\begin{proof}
By a first application of the $\Delta$-Lemma, we may assume that there exists a set~$Z$ such that $S_\alpha\cap S_\beta=Z$ for all distinct $\alpha,\beta\in C$. Put $X_\xi:=S_\xi\setminus Z$, for each $\xi\in C$.

\begin{sclaim}
For every countable $D\subset\omega_1$, there exists $\alpha\in C$ such that $D<X_\eta$ for each $\eta\in C\upw\alpha$.
\end{sclaim}

\begin{scproof}
Let~$\theta<\omega_1$ containing~$D\cup Z$.
For each $\xi\in\omega_1\setminus Z$, there exists \emph{at most} one element $f(\xi)\in C$ such that $\xi\in S_{f(\xi)}$. Any $\alpha\in C$, such that $f(\xi)<\alpha$ for each $\xi<\theta$ in the domain of~$f$, satisfies the required condition.
\end{scproof}

By applying the Claim to $D:=Z$, we get $\ol{\alpha}\in C$ such that $Z<X_\eta$ for each $\eta\in C\upw\ol{\alpha}$. Now let $\xi<\omega_1$ and suppose having constructed a strictly increasing $\xi$-sequence $\famm{\alpha_\eta}{\eta<\xi}$ in~$C\upw\ol{\alpha}$ such that $\eta<\eta'<\xi$ implies that $X_{\alpha_\eta}<X_{\alpha_{\eta'}}$. By applying the Claim to $\bigcup\famm{X_{\alpha_\eta}}{\eta<\xi}$, we obtain~$\alpha_\xi\in C$, which can be taken above both~$\ol{\alpha}$ and $\bigcup\famm{\alpha_\eta}{\eta<\xi}$, such that $X_{\alpha_\eta}<X_\zeta$ for each $\eta<\xi$ and each $\zeta\geq\alpha_\xi$. Take $W:=\setm{\alpha_\xi}{\xi<\omega_1}$.
\end{proof}

\subsection{Von Neumann regular rings}\label{Su:vNRR}
All our rings will be associative but not necessarily unital. A ring~$R$ is (von Neumann) \emph{regular} if for all $x\in R$ there exists $y\in R$ such that $xyx=x$. We shall call such an element~$y$ a \emph{quasi-inverse} of~$x$.

We shall need the following classical result (see
\cite[Theorem~1.7]{Good91}, or \cite[Section~3.6]{FrHa56} for the
general, non-unital case).

\begin{proposition}\label{P:MatReg}
For any regular ring $R$ and any positive integer $n$, the ring
$R^{n\times n}$ of all $n\times n$ matrices with entries in $R$ is
regular.
\end{proposition}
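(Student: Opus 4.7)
The plan is to treat the unital case first, then reduce the general (non-unital) case to it via local units.

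For the unital case, a standard module-theoretic argument works: when~$R$ is unital regular, every finitely generated submodule of the free module~$R^n$ is a direct summand of~$R^n$ (a classical consequence of the characterization of regularity as ``every finitely generated right ideal is generated by an idempotent'', extended to $R^n$ by induction on~$n$). Given $A\in R^{n\times n}$, identify it with a right $R$-module endomorphism of~$R^n$ and set $M:=\im A$; since~$M$ is a finitely generated direct summand of~$R^n$, it is projective. Factor $A=\iota\circ A'$ with $A'\colon R^n\twoheadrightarrow M$ the corestriction and $\iota\colon M\hookrightarrow R^n$ the inclusion, and let $\pi\colon R^n\to M$ be the projection onto~$M$ coming from a chosen complement, so that $\pi\circ\iota=\id_M$. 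Projectivity of~$M$ provides a section $s\colon M\to R^n$ of~$A'$, and then $B:=s\circ\pi\in\End(R^n)=R^{n\times n}$ satisfies
\[
ABA=\iota\circ A'\circ s\circ\pi\circ\iota\circ A'=\iota\circ\id_M\circ\id_M\circ A'=A,
\]
exhibiting~$B$ as a quasi-inverse of~$A$.

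For the non-unital case, the essential additional ingredient is the existence of \emph{local units}: every finite subset $F\subseteq R$ is contained in~$eRe$ for some idempotent $e\in R$. One proves this by first showing that every finitely generated one-sided ideal of a regular ring is generated by an idempotent, which yields a left local unit~$e_0$ for~$F$ and, separately, a right local unit~$e_1$ for $F\cup\{e_0\}$ satisfying $e_0e_1=e_0$; the symmetrization $e:=e_1+e_0-e_1e_0$ is then shown by direct calculation to be an idempotent with $ex=xe=x$ for all $x\in F$. Applied to the finite set of entries of $A\in R^{n\times n}$, this produces an idempotent $e\in R$ with $A\in(eRe)^{n\times n}$; since the corner ring~$eRe$ is unital with unit~$e$ and inherits regularity from~$R$ (an immediate verification: if $x\in eRe$ and $y\in R$ is a quasi-inverse of~$x$ in~$R$, then $eye\in eRe$ is still a quasi-inverse of~$x$), the unital case furnishes a quasi-inverse $B\in(eRe)^{n\times n}\subseteq R^{n\times n}$ of~$A$.

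The main delicate step is the construction of the two-sided local unit in the non-unital case, since a naive product of separately chosen one-sided local units need not be idempotent. Once local units are in hand, the passage to the unital case is purely formal, and the unital case itself is reduced to a standard module-theoretic fact about regular rings.
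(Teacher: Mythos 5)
Your proof is correct; note that the paper itself gives no proof of Proposition~\ref{P:MatReg}, quoting it as classical with the unital case referred to \cite[Theorem~1.7]{Good91} and the general non-unital case to \cite[Section~3.6]{FrHa56}. Your unital argument is essentially the standard one underlying Goodearl's theorem: identify $R^{n\times n}$ with $\End(R^n)$, use that the image of an endomorphism is a finitely generated (hence direct summand, hence projective) submodule of $R^n$, and splice a splitting of the corestriction with a projection onto the image; the computation $ABA=\iota\circ(A'\circ s)\circ(\pi\circ\iota)\circ A'=A$ is correct. Your reduction of the non-unital case to unital corners $eRe$ via local units is a clean alternative to the direct treatment in Fryer--Halperin, and the two points it rests on do check out: finitely generated one-sided ideals of a possibly non-unital regular ring are generated by idempotents (the usual orthogonalization argument never uses a unit, since $x=xyx\in xR$ already), and, given $e_0x=x$ and $xe_1=x$ for all $x\in F$ together with $e_0e_1=e_0$, the element $e:=e_1+e_0-e_1e_0$ is indeed idempotent and satisfies $ex=xe=x$ for all $x\in F$. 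Combined with the immediate facts that $eRe$ is unital regular and that $(eRe)^{n\times n}$ is a subring of $R^{n\times n}$ whose multiplication agrees with that of $R^{n\times n}$, this gives a complete and self-contained proof.
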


For any regular ring~$R$, we set $\LL(R):=\setm{xR}{x\in R}$. If~$y$ is a quasi-inverse of~$x$, then $xR=xyR$ and~$xy$ is idempotent, thus $\LL(R)=\setm{eR}{e\in R\text{ idempotent}}$. It is well known that~$\LL(R)$ is a sectionally complemented sublattice of the (modular) lattice of all right ideals of~$R$ (cf. \cite[Section~3.2]{FrHa54}). The proof implies that~$\LL$ defines a \emph{functor} from the category of all regular rings with ring homomorphisms to the category of \scml s with $0$-lattice homomorphisms (cf. Micol~\cite{Micol} for details). This functor preserves directed colimits.

\begin{lemma}[folklore]\label{L:L(R)unital}
A regular ring~$R$ is unital if and only if~$\LL(R)$ has a largest element.
\end{lemma}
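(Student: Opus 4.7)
My plan is as follows. The forward direction is immediate: if $R$ has a unit $1$, then $R=1\cdot R$ lies in $\LL(R)$ and contains every principal right ideal $xR$, so it is the largest element.

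For the converse, let $M$ be the largest element of $\LL(R)$. I would first argue that $M=R$ as a set. Indeed, for any $x\in R$ we have $xR\in\LL(R)$ and hence $xR\subseteq M$; invoking regularity of $R$ to pick a quasi-inverse $y$ of $x$, we have $x=x(yx)\in xR\subseteq M$, so $R\subseteq M$ and thus $M=R$. By the remark preceding the lemma, $M$ can be written as $eR$ for some idempotent $e\in R$, so $R=eR$. Each $x\in R$ then has the form $x=es$, whence $ex=e^2s=es=x$, and $e$ is a left identity of $R$.

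The only nontrivial step, and the main obstacle, is promoting $e$ to a two-sided identity, since the lattice $\LL(R)$ encodes only right-ideal information. Fix $x\in R$ and put $z:=xe-x$; a direct calculation with $ex=x$ and $e^2=e$ yields
\[
ez=e(xe)-ex=(ex)e-x=xe-x=z\,,\qquad ze=xe^2-xe=0\,.
\]
Choose $w\in R$ with $z=zwz$ (regularity) and set $k:=wz$. Then $ke=w(ze)=0$, and because $e$ is a left identity, $kr=k(er)=(ke)r=0$ for every $r\in R$; hence $kR=\set{0}$. A second application of regularity, this time to $k$, gives $k\in kR=\set{0}$, so $wz=0$ and therefore $z=zwz=0$. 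Thus $xe=x$, so $e$ is also a right identity, and $R$ is unital. The key trick is that while $\LL(R)$ only records right-ideal data, regularity lets us convert the ``error'' $xe-x$ into an element $k=wz$ whose right annihilator is all of $R$, which forces $k=0$ and collapses the error.
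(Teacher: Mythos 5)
Your proof is correct and follows essentially the same route as the paper's: both extract an idempotent $e$ with $eR$ largest, deduce from $xR\subseteq eR$ that $e$ is a left identity, and then apply regularity to the error term $x-xe$ to force it to vanish. Your final step can be shortened, avoiding the detour through $k$: since $e$ is a left identity we have $w=ew$, so $zw=(ze)w=0$ and hence $z=zwz=0$ at once (the paper does the symmetric computation, using $y=ey$ to get $(x-xe)y=0$).
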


\begin{proof}
We prove the non-trivial direction. Let~$e\in R$ idempotent such that~$eR$ is the largest element of~$\LL(R)$. For each~$x\in R$ with quasi-inverse~$y$, observe that $x=xyx\in xR$, thus, as~$xR\subseteq eR$ and by the idempotence of~$e$, we get $x=ex$. Let~$y$ be a quasi-inverse of~$x-xe$. {}From~$y=ey$ it follows that $xy-xey=0$, thus
 \[
 x-xe=(x-xe)y(x-xe)=(xy-xey)(x-xe)=0\,,
 \]
so $x=xe$. Therefore, $e$ is the unit of~$R$.
\end{proof}

Denote by $\Idemp R$ the set of all idempotent elements in a ring~$R$. Define the \emph{orthogonal sum} in~$\Idemp R$ by
 \[
 a=\bigoplus_{i<n}a_i\ \Longleftrightarrow\ 
 \Bigl(a=\sum_{i<n}a_i\text{ and }
 a_ia_j=0\text{ for all distinct }i,j<n\Bigr)\,.
 \]
For idempotents $a$ and $b$ in a ring~$R$, let $a\utr b$ hold if $a=ab=ba$; equivalently, there exists an idempotent~$x$ such that $a\oplus x=b$; and equivalently, $a\in bRb$. 

We shall need the following well known (and easy) result.

\begin{lemma}[folklore]\label{L:DecompId}
Let $A$ and $B$ be right ideals in a ring~$R$ and let~$e$ be an idempotent element of~$R$. If $eR=A\oplus B$, then there exists a unique pair $(a,b)\in A\times B$ such that $e=a+b$. Furthermore, both~$a$ and~$b$ are idempotent, $e=a\oplus b$, $A=aR$, and $B=bR$.
\end{lemma}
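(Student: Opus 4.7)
The plan is to first extract the decomposition $e=a+b$ from the assumption $eR=A\oplus B$, then bootstrap the idempotence of $e$ into idempotence and orthogonality of $a$ and $b$, and finally deduce $A=aR$ and $B=bR$ by applying the same decomposition trick to an arbitrary $x\in A$.

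The starting point is the observation that $e=e\cdot e\in eR=A\oplus B$, so there exist unique $a\in A$ and $b\in B$ with $e=a+b$; uniqueness is immediate from $A\cap B=\set{0}$. The key small lemma that unlocks everything is that $e$ acts as a \emph{left identity} on $eR$: for $x\in eR$, write $x=ey$ for some $y\in R$, whence $ex=e^2y=ey=x$ by idempotence of~$e$. Applying this to $a\in A\subseteq eR$ gives $ea=a$, i.e.\ $(a+b)a=a$, so that
\[
a^2-a=-ba.
\]
The left-hand side lies in $A$ (because $a\in A$ and $A$ is a right ideal, so $a^2=a\cdot a\in aR\subseteq A$), while the right-hand side lies in $B$ (because $b\in B$ implies $ba\in bR\subseteq B$). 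Since $A\cap B=\set{0}$, both sides vanish, yielding $a^2=a$ and $ba=0$. The symmetric argument starting from $eb=b$ gives $b^2=b$ and $ab=0$. Thus $a,b$ are orthogonal idempotents with $a+b=e$, which is exactly the relation $e=a\oplus b$.

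For the last claim, I would apply the same left-identity trick to an arbitrary $x\in A$: since $A\subseteq eR$, we have $x=ex=(a+b)x=ax+bx$, with $ax\in aR\subseteq A$ and $bx\in bR\subseteq B$. Uniqueness of the decomposition $x=x+0$ in $A\oplus B$ forces $bx=0$ and $ax=x$, so $x\in aR$. Together with the evident inclusion $aR\subseteq A$, this gives $A=aR$, and $B=bR$ follows by symmetry.

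There is no real obstacle here: the whole proof is a two-step algebraic manipulation whose only ingredient beyond the definitions is the fact that $e$ is a left identity on $eR$. The only point where one must be careful is to keep track of which side of the multiplication puts an element into which right ideal, so that the decisive identities $a^2-a=-ba\in A\cap B$ and $x=ax+bx$ can be read off correctly.
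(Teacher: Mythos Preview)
Your proof is correct and complete. The paper does not actually supply a proof of this lemma (it is stated as folklore and left to the reader), so there is nothing to compare against; your argument via the left-identity property of~$e$ on~$eR$ and the uniqueness of decompositions in $A\oplus B$ is exactly the standard one.
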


\subsection{Category theory}\label{Su:CatTh}
For a partially ordered set~$I$ and a category~$\cA$, an \emph{$I$-indexed diagram from~$\cA$} is a system $\famm{A_i,f_i^j}{i\leq j\text{ in }I}$, where all~$A_i$ are objects in~$\cA$, $f_i^j\colon A_i\to A_j$ in~$\cA$, and $f_i^i=\id_{A_i}$ together with $f_i^k=f_j^k\circ f_i^j$ for $i\leq j\leq k$ in~$I$. Such an object can of course be identified with a functor from~$I$, viewed as a category the usual way, to~$\cA$. If~$\cB$ is a category, $\Phi\colon\cA\to\cB$ is a functor, and~$\vec{B}$ is an $I$-indexed diagram from~$\cB$, we say that an $I$-indexed diagram~$\vec{A}$ from~$\cA$ \emph{lifts~$\vec{B}$ with respect to~$\Phi$} if there is a natural equivalence from~$\Phi\vec{A}$ to~$\vec{B}$ (in notation $\Phi\vec{A}\cong\vec{B}$).

\section{\Ban\ functions on lattices and rings}\label{S:BanLattRing}

In the present section we recall some definitions and results from~\cite{BanFct1}.

\begin{definition}\label{D:BanLatt}
Let $X$ be a subset in a bounded lattice~$L$. A \emph{partial \Ban\ function on~$X$ in~$L$} is an antitone map $f\colon X\to L$ such that $x\oplus f(x)=1$ for each $x\in X$. In case~$X=L$, we say that~$f$ is a \emph{\Ban\ function on~$L$}.
\end{definition}

\begin{definition}\label{D:BanRing}
Let $X$ be a subset in a ring~$R$. A \emph{partial \Ban\ function on~$X$ in~$R$} is a mapping $\eps\colon X\to\Idemp R$ such that
\begin{enumerate}
\item $xR=\eps(x)R$ for each $x\in X$.

\item $xR\subseteq yR$ implies that $\eps(x)\utr\eps(y)$, for all $x,y\in X$.
\end{enumerate}
In case $X=R$ we say that~$f$ is a \emph{\Ban\ function on~$R$}.
\end{definition}

In the context of Definition~\ref{D:BanRing}, we put
 \begin{equation}\label{Eq:DefnLR(X)}
 \LL_R(X):=\setm{xR}{x\in X}\,.
 \end{equation}
 
We proved the following result in \cite[Lemma~3.5]{BanFct1}.

\begin{lemma}\label{L:BanLattRing}
Let~$R$ be a unital regular ring and let $X\subseteq R$. Then the following are equivalent:
\begin{enumerate}
\item There exists a partial \Ban\ function on~$\LL_R(X)$ in~$\LL(R)$.

\item There exists a partial \Ban\ function on~$X$ in~$R$.
\end{enumerate}
\end{lemma}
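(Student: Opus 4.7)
The plan is to exhibit, for each element of~$X$, a bridge between the idempotents generating~$xR$ and the direct complements of~$xR$ in~$R$, and then to check that, under this bridge, the ring-theoretic condition $\eps(x) \utr \eps(y)$ matches the lattice-theoretic antitonicity of~$f$. The bridge is supplied by Lemma~\ref{L:DecompId} applied at $e := 1$: any direct decomposition $R = A \oplus B$ of the right $R$-module~$R$ determines a unique pair $(a,b) \in A \times B$ with $1 = a + b$, and then $a,b$ are orthogonal idempotents with $A = aR$ and $B = bR$. Equivalently, the idempotents of~$R$ correspond bijectively to the direct decompositions $R = eR \oplus (1-e)R$.

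For $(ii) \Rightarrow (i)$, given~$\eps$, I would define $f \colon \LL_R(X) \to \LL(R)$ by $f(xR) := (1 - \eps(x))R$. Well-definedness comes from applying condition~(2) symmetrically: $xR = yR$ forces both $\eps(x) \utr \eps(y)$ and $\eps(y) \utr \eps(x)$, whence $\eps(x) = \eps(y)$. The equation $xR \oplus f(xR) = R$ is then immediate from condition~(1) and the orthogonal decomposition $1 = \eps(x) + (1 - \eps(x))$. For antitonicity, $xR \subseteq yR$ yields $\eps(x) = \eps(x)\eps(y) = \eps(y)\eps(x)$, so direct expansion gives
\[
(1-\eps(x))(1-\eps(y)) = 1 - \eps(y)\,,
\]
placing $1 - \eps(y)$ inside $(1-\eps(x))R$, whence $f(yR) \subseteq f(xR)$.

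For $(i) \Rightarrow (ii)$, given~$f$, I would apply Lemma~\ref{L:DecompId} to the decomposition $R = xR \oplus f(xR)$ and let $\eps(x)$ be the unique idempotent in $xR$ with $1 - \eps(x) \in f(xR)$; the same lemma simultaneously yields $\eps(x)R = xR$, which is condition~(1). For condition~(2), suppose $xR \subseteq yR$. On the one hand, $\eps(x) \in \eps(y)R$ together with the idempotence of~$\eps(y)$ yields $\eps(y)\eps(x) = \eps(x)$. On the other, antitonicity of~$f$ gives $(1-\eps(y))R \subseteq (1-\eps(x))R$, so $1 - \eps(y) = (1-\eps(x))(1-\eps(y))$; expansion and cancellation then produce $\eps(x)\eps(y) = \eps(x)$. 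Combined, these are exactly $\eps(x) \utr \eps(y)$.

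The arithmetic is routine and there is no real obstacle; the only point requiring care is reading off complements on the correct side of the module. The hypothesis that~$R$ is unital enters essentially, both to form the complements $1-e$ and to apply Lemma~\ref{L:DecompId} at $e := 1$; regularity itself plays no direct role in the translation, since all required idempotents are supplied either by~$\eps$ or by the decomposition lemma.
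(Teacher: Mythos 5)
Your proof is correct: the well-definedness check for $(ii)\Rightarrow(i)$, the complementation $\eps(x)R\oplus(1-\eps(x))R=R$, the antitonicity computation, and the use of Lemma~\ref{L:DecompId} at $e:=1$ to extract $\eps(x)$ in $(i)\Rightarrow(ii)$ together with the two containments giving $\eps(x)=\eps(y)\eps(x)=\eps(x)\eps(y)$ are all sound, and your remark that regularity plays no role in the translation (only unitality does) is accurate. Note that the present paper does not prove this lemma at all---it quotes it as \cite[Lemma~3.5]{BanFct1}---so there is no in-paper argument to compare with; your proof is the natural self-contained one and fills that gap correctly.
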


\section{A coordinatizable \cml\ without a \Ban\ function}\label{S:CoordAl1}

For a field~$\FF$, we consider the similarity type~$\Sigma_\FF=(0,1,-,\cdot,',\famm{h_\lambda}{\lambda\in\FF})$ that consists of two symbols of constant~$0$ and~$1$, two binary operation symbols~$-$ (difference) and~$\cdot$ (multiplication), one unary operation symbol~$'$ (quasi-inversion), and a family of unary operations~$h_\lambda$, for $\lambda\in\FF$ (left multiplications by the elements in~$\FF$). We consider the variety~$\Reg_\FF$ of all unital $\FF$-algebras with a distinguished operation~$x\mapsto x'$ in which the identity $xx'x=x$ holds (i.e., $x\mapsto x'$ is a quasi-inversion). We shall call~$\Reg_\FF$ the \emph{variety of all $\FF$-algebras with quasi-inversion}. Of course, all the ring reducts of the structures in~$\Reg_\FF$ are regular, and the reducts of such structures to the subtype $\Sigma:=(0,-,\cdot,')$ are \emph{regular rings with quasi-inversion}.

Until Proposition~\ref{P:cRonetoone} we shall fix a variety (i.e., the class of all the structures satisfying a given set of identities) $\xV$ of~$\Sigma_\FF$-structures contained in~$\Reg_\FF$. By \cite[Theorem~V.11.2.4]{Malc}, it is possible to construct ``objects defined by generators and relations'' in any (quasi-)variety.

\begin{definition}\label{D:cR(A)}
For any (possibly empty) chain~$\Lambda$, we shall denote by $\cR_\xV(\Lambda)$ the $\xV$-object defined by generators $\talpha$, for $\alpha\in\Lambda$, and the relations
 \begin{equation}\label{Eq:DefRelPhi}
 \talpha=\tbeta\cdot\talpha\,,\quad\text{for all }\alpha\leq\beta\text{ in }
 \Lambda\,.
 \end{equation}
We shall write $\talpha^\Lambda$ instead of~$\talpha$ in case~$\Lambda$ needs to be specified.
\end{definition}

Observe, in particular, that the $(0,1,-,\cdot,\famm{h_\lambda}{\lambda\in\FF})$-reduct of~$\cR_\xV(\Lambda)$ is a regular $\FF$-algebra.

For a chain~$\Lambda$, denote by $\Lambda\sqcup\set{\zb,\ob}$ the chain obtained by adjoining to~$\Lambda$ a new smallest element~$\zb$ and a new largest element~$\ob$. Likewise, define $\Lambda\sqcup\set{\zb}$ and $\Lambda\sqcup\set{\ob}$. We extend the meaning of $\talpha$, for $\alpha\in\Lambda\sqcup\set{\zb,\ob}$, by setting 
 \begin{equation}\label{Eq:projzbob}
 \widetilde{\zb}=0\text{ and }\widetilde{\ob}=1\,.
 \end{equation}
The equations~\eqref{Eq:DefRelPhi} are still satisfied for all $\alpha\leq\beta$ in $\Lambda\sqcup\set{\zb,\ob}$.

Denote by~$\Ch$ the category whose objects are all the (possibly empty) chains and where, for chains~$A$ and~$B$, a \emph{morphism} from~$A$ to~$B$ is an isotone map from $A\sqcup\set{\zb,\ob}$ to $B\sqcup\set{\zb,\ob}$ fixing both~$\zb$ and~$\ob$. In particular, we identify every isotone map from~$A$ to~$B$ with its extension that fixes both~$\zb$ and~$\ob$. This occurs, in particular, in case~$A$ is a subchain of~$B$ and $f:=e_A^B$ is the inclusion map from~$A$ into~$B$; in this case, we put $\bde_A^B:=\cR_\xV(e_A^B)$, the canonical $\Sigma_\FF$-morphism from~$\cR_\xV(A)$ to~$\cR_\xV(B)$.

Every morphism $f\colon A\to B$ in~$\Ch$ induces a (unique) $\Sigma_\FF$-homomorphism\linebreak $\cR_\xV(f)\colon\cR_\xV(A)\to\cR_\xV(B)$ by the rule
 \begin{equation}\label{Eq:DefcR(f)}
 \cR_\xV(f)(\talpha^A)=
 \widetilde{f(\alpha)}^B\,,
 \quad\text{for each }\alpha\in A
 \end{equation}
(use \eqref{Eq:DefRelPhi} and \eqref{Eq:projzbob}).
The assignments $\Lambda\mapsto\cR_\xV(\Lambda)$, $f\mapsto\cR_\xV(f)$ define a functor from~$\Ch$ to~$\xV$. For a chain~$\Lambda$ and an element $x\in\cR_\xV(\Lambda)$, there are a $\Sigma_\FF$-term~$\st$ and finitely many elements~$\xi_1$, \dots, $\xi_n\in\Lambda$ such that
 \begin{equation}\label{Eq:Reprxxi}
 x=\st(\txi_1,\dots,\txi_n)\,.
 \end{equation}
in~$\cR_\xV(\Lambda)$.
Any subset of~$\Lambda$ containing $\set{\xi_1,\dots,\xi_n}$ is called a \emph{support} of~$x$. In particular, every element of~$\cR_\xV(\Lambda)$ has a finite support, and a subset~$S$ is a support of~$x$ if and only if~$x$ belongs to the range of~$\bde_S^\Lambda$.

\begin{lemma}\label{L:ImSupp}
Let $A$ and $B$ be chains and let~$f$ be a morphism from~$A$ to~$B$ in~$\Ch$. Let~$x\in\cR_\xV(A)$ and let~$S$ be a support of~$x$. Then~$f(S)\setminus\set{\zb,\ob}$ is a support of~$\cR_\xV(f)(x)$.
\end{lemma}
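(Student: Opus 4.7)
The plan is to unfold the definition of ``support'' via the term representation~\eqref{Eq:Reprxxi} and then apply the $\Sigma_\FF$-homomorphism $\cR_\xV(f)$ termwise, using~\eqref{Eq:DefcR(f)} to compute what happens on generators and~\eqref{Eq:projzbob} to absorb the boundary elements $\zb,\ob$ into the constant symbols of $\Sigma_\FF$. Concretely, since $S$ is a support of $x$, I would start by fixing a $\Sigma_\FF$-term $\st$ and elements $\xi_1,\dots,\xi_n\in S$ with
\[
x=\st(\txi_1^A,\dots,\txi_n^A).
\]
Applying $\cR_\xV(f)$ and using that it is a $\Sigma_\FF$-homomorphism, \eqref{Eq:DefcR(f)} gives
\[
\cR_\xV(f)(x)=\st\bigl(\widetilde{f(\xi_1)}^B,\dots,\widetilde{f(\xi_n)}^B\bigr).
\]

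The next step is to partition the indices into those $i$ for which $f(\xi_i)\in B$ (i.e., $f(\xi_i)\notin\set{\zb,\ob}$) and those for which $f(\xi_i)\in\set{\zb,\ob}$. By~\eqref{Eq:projzbob}, entries of the second type are just the ring constants $0$ and $1$, which are already nullary operation symbols of $\Sigma_\FF$. Substituting these constants into $\st$ yields a new $\Sigma_\FF$-term $\st'$ whose free variables correspond precisely to the indices of the first type, so that
\[
\cR_\xV(f)(x)=\st'\bigl(\widetilde{\eta_1}^B,\dots,\widetilde{\eta_k}^B\bigr)
\]
with $\set{\eta_1,\dots,\eta_k}\subseteq f(S)\setminus\set{\zb,\ob}$. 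By the definition of support given immediately after~\eqref{Eq:Reprxxi}, this exhibits $f(S)\setminus\set{\zb,\ob}$ as a support of $\cR_\xV(f)(x)$, which is what we want.

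There is no real obstacle here; the argument is a bookkeeping exercise in functoriality. The only point requiring a little care is the handling of the adjoined bounds: one must observe that the convention $\widetilde{\zb}=0$, $\widetilde{\ob}=1$ is exactly what allows the boundary values of $f$ to be absorbed into the $\Sigma_\FF$-term structure, rather than appearing as genuine generators. An equivalent way to phrase the proof, if a more categorical presentation is preferred, would be to factor $f\circ e_S^A$ in $\Ch$ as $e_T^B\circ g$ where $T:=f(S)\setminus\set{\zb,\ob}$ and $g\colon S\to T$ is induced by $f$, and then apply the functor $\cR_\xV$ together with the equation $\bde_S^A(y)=x$ coming from the support hypothesis; this yields $\cR_\xV(f)(x)=\bde_T^B(\cR_\xV(g)(y))$, which again lies in the range of $\bde_T^B$.
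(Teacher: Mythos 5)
Your proof is correct and follows essentially the same route as the paper: represent $x$ by a $\Sigma_\FF$-term over $S$, apply the homomorphism $\cR_\xV(f)$ termwise, and absorb the values $\widetilde{f(\xi_i)}\in\set{0,1}$ into the constant symbols of $\Sigma_\FF$, so that the remaining generators lie in $f(S)\setminus\set{\zb,\ob}$. The paper states this more tersely, and your alternative categorical factorization $f\circ e_S^A=e_T^B\circ g$ is a valid equivalent phrasing.
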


\begin{proof}
There is a representation of~$x$ as in~\eqref{Eq:Reprxxi} in~$\cR_\xV(A)$, with $\xi_1,\dots,\xi_n\in S$. As~$\cR_\xV(f)$ is a $\Sigma_\FF$-homomorphism, we obtain
 \[
 \cR_\xV(f)(x)=\st\bigl(\widetilde{f(\xi_1)},\dots,\widetilde{f(\xi_n)}\bigr)
 \quad\text{in }\cR_\xV(B)\,.
 \]
As $\widetilde{f(\xi_i)}$ belongs to $f(S)\cup\set{0,1}$ for each~$i$ and both elements~$0$ and~$1$ of~$\cR_\xV(B)$ are interpretations of symbols of constant, the conclusion follows.
\end{proof}

The following result implies immediately that all maps $\bde_A^B\colon\cR_\xV(A)\to\cR_\xV(B)$, for~$A$ a subchain of a chain~$B$, are $\Sigma_\FF$-embeddings.

\begin{proposition}\label{P:cRonetoone}
Let $A$ and $B$ be chains and let $f\colon A\to B$ be an isotone map. If~$f$ is one-to-one, then so is~$\cR_\xV(f)$.
\end{proposition}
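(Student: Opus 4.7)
The plan is to construct a one-sided inverse for $\cR_\xV(f)$ arising, via functoriality, from a retraction in~$\Ch$. This will be done in two reduction steps: first to the special case where~$f$ is a set inclusion, and then to the case where the source chain is finite.

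First I would factor $f\colon A\to B$ through its image as $f=e_{f(A)}^B\circ f'$, where $f'\colon A\to f(A)$ is the corestriction of~$f$. Since $A$ and $f(A)$ are chains and~$f$ is a bijection from~$A$ onto~$f(A)$, the inverse of~$f'$ is isotone as well; hence $f'$ is an isomorphism in~$\Ch$, and by functoriality $\cR_\xV(f')$ is a $\Sigma_\FF$-iso\-mor\-phism. As $\cR_\xV(f)=\bde_{f(A)}^B\circ\cR_\xV(f')$, the injectivity of $\cR_\xV(f)$ will follow as soon as $\bde_C^B$ is proved one-to-one for every subchain~$C$ of~$B$.

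To handle arbitrary~$C$, I would peel off its cardinality via the finite support property stated just before Lemma~\ref{L:ImSupp}. Given $x,y\in\cR_\xV(C)$ with $\bde_C^B(x)=\bde_C^B(y)$, a common finite support $S\subseteq C$ of~$x$ and~$y$ yields $x=\bde_S^C(x_0)$ and $y=\bde_S^C(y_0)$ for suitable $x_0,y_0\in\cR_\xV(S)$. Applying functoriality to $e_S^B=e_C^B\circ e_S^C$ then gives $\bde_S^B(x_0)=\bde_S^B(y_0)$, so it suffices to prove that $\bde_S^B$ is one-to-one for every finite subchain $S\subseteq B$.

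For such a finite~$S$, the required retraction is easy to write down: define $g\colon B\sqcup\set{\zb,\ob}\to S\sqcup\set{\zb,\ob}$ by $g(\zb):=\zb$, $g(\ob):=\ob$, and
 \[
 g(b):=\max\setm{s\in S\cup\set{\zb}}{s\leq b}\quad\text{for each }b\in B,
 \]
the maximum being taken in the finite bounded chain $S\sqcup\set{\zb,\ob}$; it exists because the set is finite and nonempty (it contains~$\zb$). The map~$g$ is visibly isotone, fixes~$\zb$ and~$\ob$, and restricts to the identity on~$S$, so it is a $\Ch$-morphism with $g\circ e_S^B=\id_S$. Functoriality then yields $\cR_\xV(g)\circ\bde_S^B=\id_{\cR_\xV(S)}$, forcing $\bde_S^B$ to be injective. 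The main conceptual subtlety is that no such order-retraction is available in general for infinite $C\subseteq B$ (suprema in~$C$ need not exist in $C\sqcup\set{\zb,\ob}$), but the finite support property localizes every equation in $\cR_\xV(C)$ to a finite subchain, where the naive max-based retraction works without fuss.
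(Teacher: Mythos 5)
Your proposal is correct and takes essentially the same route as the paper: both arguments rest on the finite-support property together with an isotone max-type retraction (sending $\beta$ to the largest element of the finite support whose image lies below~$\beta$), so that functoriality of $\cR_\xV$ produces a left inverse and hence injectivity. The paper simply does this in a single step, defining $g\colon B\to S\sqcup\set{\zb}$ by $g(\beta):=$ the largest $\xi\in S$ with $f(\xi)\leq\beta$, checking $g\circ f\circ e_S^A=\id_S$, and concluding from $\cR_\xV(f)(x)=0\Rightarrow x=0$ (the difference operation making trivial kernel suffice), whereas you first factor~$f$ through its image and then treat the inclusion of a finite support; the content is the same.
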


\begin{proof}
It suffices to prove that $\cR_\xV(f)(x)=0$ implies that $x=0$, for each $x\in\cR_\xV(A)$. There is a representation of~$x$ as in~\eqref{Eq:Reprxxi} in~$\cR_\xV(A)$. Put $S:=\set{\xi_1,\dots,\xi_n}$ and $u:=\st(\txi_1^S,\dots,\txi_n^S)$. Let $g\colon B\to S\sqcup\set{\zb}$ be the map defined by the rule
 \[
 g(\beta):=\begin{cases}
 \text{largest }\xi\in S\text{ such that }f(\xi)\leq\beta\,,
 &\text{if such a }\xi\text{ exists},\\
 \zb\,,&\text{otherwise},
 \end{cases}\quad\text{for each }\beta\in B\,.
 \]
It is obvious that $g$ is isotone. Furthermore, as $f$ is one-to-one and isotone, we obtain $g\circ f\circ e_S^A=\id_S$, so $\cR_\xV(g)\circ\cR_\xV(f)\circ\bde_S^A=\id_{\cR_\xV(S)}$, and so, using the equality $\cR_\xV(f)(x)=0$,
 \[
 u=\cR_\xV(g)\circ\cR_\xV(f)\circ\bde_S^A(u)=\cR_\xV(g)\circ\cR_\xV(f)(x)=0\,,
 \]
and therefore $x=\bde_S^A(u)=0$.
\end{proof}

Now we shall put more conditions on the variety~$\xV$ of $\FF$-algebras with quasi-inversion. We fix a \emph{countable} field~$\FF$, and we consider the following elements in the matrix ring~$\FF^{3\times 3}$:
 \[
 A:=\begin{pmatrix}1&0&0\\ 0&0&0\\ 0&0&0\end{pmatrix}\,,\quad
 B:=\begin{pmatrix}1&0&1\\ 0&1&0\\ 0&0&0\end{pmatrix}\,,\quad
 I:=\begin{pmatrix}1&0&0\\ 0&1&0\\ 0&0&1\end{pmatrix}\,.
 \]
Observe that $A^2=A$, $B^2=B$, and $A=BA\neq AB$.

Denote by $\FF[M]$ the $\FF$-subalgebra of $\FF^{3\times 3}$ generated by~$\set{M}$, for any $M\in\FF^{3\times 3}$. In particular, both maps from~$\FF\times\FF$ to~$\FF^{3\times 3}$ defined by $(x,y)\mapsto xA+y(I-A)$ and $(x,y)\mapsto xB+y(I-B)$ are isomorphisms of $\FF$-algebras onto~$\FF[A]$ and~$\FF[B]$, respectively, and $\FF[A]\cap\FF[B]=\FF\cdot I$. For each $X\in\FF^{3\times 3}$, let~$X'$ be a quasi-inverse of~$X$ in the smallest member of $\set{\FF\cdot I,\FF[A],\FF[B],\FF^{3\times 3}}$ containing~$X$ as an element. Endowing each of the algebras $\FF\cdot I$, $\FF[A]$, $\FF[B]$, and $\FF^{3\times 3}$ with this quasi-inversion, we obtain a commutative diagram in~$\Reg_\FF$, represented in Figure~\ref{Fig:CommSq}.
\begin{figure}[htb]
 \[
 \xymatrix{
 & \FF^{3\times 3} & \\
 \FF[A]\ar@{_(->}[ur] & & \FF[B]\ar@{^(->}[ul]\\
 & \FF\cdot I\ar@{_(->}[ul]\ar@{^(->}[ur] & }
 \]
\caption{A commutative square in the variety $\Reg_\FF$}
\label{Fig:CommSq}
\end{figure}
We denote by~$R_\FF$ the $\FF$-algebra with quasi-inversion on $\FF^{3\times 3}$ just constructed, and we denote by~$\xV_\FF$ the variety of $\FF$-algebras with quasi-inversion generated by~$R_\FF$.

\begin{proposition}\label{P:NoBan}
Let~$\xV$ be any variety of $\FF$-algebras with quasi-inversion such that $R_\FF\in\xV$. Then the following statements hold:
\begin{enumerate}
\item There exists no partial \Ban\ function on $\setm{\txi}{\xi<\omega_1}$ in the \pup{unital, regular} ring~$\cR_\xV(\omega_1)$. In particular, there is no \Ban\ function on the ring $\cR_\xV(\omega_1)$.

\item There exists no partial \Ban\ function on $\setm{\txi\cdot\cR_\xV(\omega_1)}{\xi<\omega_1}$ in the \pup{complemented, modular} lattice $\LL(\cR_\xV(\omega_1))$. In particular, there is no \Ban\ function on the lattice $\LL(\cR_\xV(\omega_1))$.
\end{enumerate}
\end{proposition}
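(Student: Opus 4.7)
Statements~(1) and~(2) are equivalent by Lemma~\ref{L:BanLattRing} applied to $X:=\setm{\txi}{\xi<\omega_1}$, so it suffices to prove~(1). The plan is to suppose a partial \Ban\ function $\eps$ exists on~$X$ in $R:=\cR_\xV(\omega_1)$, extract from it a structural obstruction concentrated on an uncountable subset of~$\omega_1$, and then collapse that obstruction to a contradiction inside $R_\FF$ by exploiting the asymmetric relation $BA=A\neq AB$ hard-wired into the definition of $R_\FF$.

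Write $e_\alpha:=\eps(\talpha)$. Each $e_\alpha$ is idempotent with $e_\alpha R=\talpha R$; and, since the defining relation $\talpha=\tbeta\cdot\talpha$ gives $\talpha R\subseteq\tbeta R$ for $\alpha\leq\beta$, we obtain $e_\alpha\utr e_\beta$, i.e., $e_\alpha e_\beta=e_\beta e_\alpha=e_\alpha$. For each $\alpha<\omega_1$ pick a finite support $S_\alpha$ of $e_\alpha$ containing $\alpha$ (adjoining $\alpha$ if necessary; any superset of a support is a support). Apply Lemma~\ref{L:Deltaplus} to $\famm{S_\alpha}{\alpha<\omega_1}$ to obtain an uncountable $W\subseteq\omega_1$ and a finite set $Z$ with $Z<X_\alpha<X_\beta$ for all $\alpha<\beta$ in $W$, where $X_\alpha:=S_\alpha\setminus Z$; after discarding at most $|Z|$ exceptional elements we may assume $\alpha>\max Z$, whence $\alpha\in X_\alpha$, for every $\alpha\in W$.

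Now fix any $\alpha<\beta$ in $W$, set $\theta:=\max X_\alpha$, and define $\psi\colon\cR_\xV(\omega_1)\to R_\FF$ on generators by
\[
\psi(\txi):=\begin{cases}
 0 & \text{if }\xi\leq\max Z,\\
 A & \text{if }\max Z<\xi\leq\theta,\\
 B & \text{if }\xi>\theta.
\end{cases}
\]
For $\xi\leq\eta$ the required relation $\psi(\txi)=\psi(\teta)\psi(\txi)$ reduces to one of $0=M\cdot 0$, $A=AA$, $A=BA$, or $B=BB$, all of which hold in $R_\FF$; since $R_\FF\in\xV$, the assignment extends uniquely to a $\xV$-morphism~$\psi$. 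By construction $\psi(\talpha)=A$ and $\psi(\tbeta)=B$. Furthermore, for every $\xi\in S_\alpha=Z\cup X_\alpha$ one has $\psi(\txi)\in\set{0,A}$, so $\psi(e_\alpha)$ lies in the $\Sigma_\FF$-subalgebra $\FF[A]$ of $R_\FF$ (closed under the chosen quasi-inversion by the construction preceding Proposition~\ref{P:NoBan}); symmetrically $\psi(e_\beta)\in\FF[B]$.

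The four idempotents of $\FF[A]=\FF\cdot I+\FF\cdot A$ are $0$, $A$, $I-A$, $I$, with pairwise distinct right ideals $0$, $AR_\FF$, $(I-A)R_\FF$, $R_\FF$; hence $\psi(e_\alpha)R_\FF=\psi(\talpha)R_\FF=AR_\FF$ forces $\psi(e_\alpha)=A$, and symmetrically $\psi(e_\beta)=B$. Applying $\psi$ to $e_\alpha e_\beta=e_\alpha$ then yields $AB=A$, contradicting the direct matrix computation $AB=\bigl(\begin{smallmatrix}1&0&1\\0&0&0\\0&0&0\end{smallmatrix}\bigr)\neq A$. The delicate point is the combinatorial reduction in the second paragraph: one needs a common ``root'' of the supports to be cleanly separated from their pairwise-ordered tails, which is exactly what the strengthened $\Delta$-system lemma provides; everything else is then forced by the $\Sigma_\FF$-closedness of $\FF[A]$ and $\FF[B]$ together with the single asymmetric identity $BA=A\neq AB$ built into $R_\FF$.
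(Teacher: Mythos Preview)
Your proof is correct and, in one respect, a genuine simplification of the paper's argument. Both proofs share the same skeleton: reduce~(ii) to~(i) via Lemma~\ref{L:BanLattRing}, apply Lemma~\ref{L:Deltaplus} to the supports of the idempotents~$e_\alpha$, and then evaluate in~$R_\FF$ so that the images of~$e_\alpha$ and~$e_\beta$ land in~$\FF[A]$ and~$\FF[B]$ respectively, forcing them to equal~$A$ and~$B$ and contradicting $e_\alpha\utr e_\beta$.

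The difference lies in how the root~$Z$ is handled. The paper first passes through an auxiliary collapsing morphism $f\colon\omega_1\to W\sqcup\set{\zb}$ that sends~$Z$ to~$\zb$ and each tail~$S'_\xi$ to the single point~$\xi$, thereby reducing each idempotent to a one-variable term~$\se_\xi(\txi)$; it then invokes the countability of~$\FF$ (hence of the set of $\Sigma_\FF$-terms) to arrange a common term~$\se$ before evaluating. You bypass this entirely: your evaluation~$\psi$ sends the root~$Z$ directly to~$0\in R_\FF$, which lies in both~$\FF[A]$ and~$\FF[B]$, so no preliminary collapse to one variable is needed and the countability of~$\FF$ is never used. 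Thus your argument actually establishes Proposition~\ref{P:NoBan} for an arbitrary field~$\FF$, whereas the paper's proof, as written, appeals to countability (cf.\ the parenthetical remark just before the final contradiction in the paper). What the paper's route buys is a uniform description of all the~$e_\xi$ by a single term, which is conceptually tidy and reusable; what your route buys is a shorter path and a stronger conclusion.
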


\begin{proof}
A direct application of Lemma~\ref{L:BanLattRing} shows that it is sufficient to establish the result of the first sentence of~(i).

Set $X:=\setm{\txi}{\xi<\omega_1}$ and suppose that there exists a partial \Ban\ function $\rho\colon X\to\Idemp\cR_\xV(\Lambda)$. For each $\xi<\omega_1$, there exists $u_\xi\in\cR_\xV(\omega_1)$ such that
 \begin{equation}\label{Eq:bunchxiuxi}
 \txi=\txi\cdot u_\xi\cdot\txi\text{ and }
 \rho(\txi)=\txi\cdot u_\xi\quad\text{in }\cR_\xV(\Lambda)\,.
 \end{equation} 
Pick a finite support~$S_\xi$ of~$u_\xi$ containing~$\set{\xi}$, for each $\xi<\omega_1$. By Lemma~\ref{L:Deltaplus}, there are a (finite) set~$Z$ and an uncountable subset~$W$ of~$\omega_1$ such that
 \begin{equation}\label{Eq:XalphaIncr}
 S_\xi\cap S_\eta=Z\text{ and }
 Z<S_\xi\setminus Z<S_\eta\setminus Z
 \text{ for all }\xi<\eta\text{ in }W\,.
 \end{equation}
Put $S'_\xi:=S_\xi\setminus Z$, for each $\xi\in W$.
We define a map $f\colon\omega_1\to W\sqcup\set{\zb}$ by the rule
 \[
 f(\alpha):=\begin{cases}
 \text{least }\xi\in W\text{ such that }\alpha\in\omega_1\dnw S'_\xi\,,&
 \text{if }\alpha\in\omega_1\upw S'_0\,,\\
 \zb\,,&\text{otherwise},
 \end{cases}
 \quad\text{for each }\alpha<\omega_1\,.
 \]
The precaution to separate the case where $\alpha\in\omega_1\dnw S'_\xi$ is put there in order to ensure, using~\eqref{Eq:XalphaIncr}, that $f(\alpha)=\zb$ for each $\alpha\in Z$. Observe that~$f$ is isotone and (using~\eqref{Eq:XalphaIncr} again) that the restriction of~$f$ to~$S'_\xi$ is the constant map with value~$\xi$, for each~$\xi\in W$. In particular, $f\res_W=\id_W$.

Set $v_\xi:=\cR_\xV(f)(u_\xi)$ and $e_\xi:=\cR_\xV(f)(\rho(\txi))$, for each $\xi\in W$. By applying the morphism~$\cR_\xV(f)$ to~\eqref{Eq:bunchxiuxi}, we thus obtain that
 \begin{equation}\label{Eq:bunchinW}
 \txi=\txi\cdot v_\xi\cdot\txi\text{ and }
 e_\xi=\txi\cdot v_\xi\quad\text{in }\cR_\xV(W)\,,\quad
 \text{for each }\xi\in W\,.
 \end{equation}
Furthermore, by applying $\cR_\xV(f)$ to the relation $\rho(\txi)\utr\rho(\teta)$, we obtain the system of relations
 \begin{equation}\label{Eq:alphbetutrW}
 e_\xi\utr e_\eta\text{ in }\cR_\xV(W)\,,\quad\text{for all }
 \xi\leq\eta\text{ in }W\,.
 \end{equation}
Furthermore, as~$u_\xi$ has support~$S_\xi$ and $f(S_\xi)=f(Z)\cup f(S'_\xi)\subseteq\set{\zb,\xi}$, it follows from Lemma~\ref{L:ImSupp} that~$\set{\xi}$ is a support of~$v_\xi$, so $v_\xi=\st_\xi(\txi)$ for some term~$\st_\xi$ of~$\Sigma_\FF$. As~$\FF$ is countable, there are only countably many terms in~$\Sigma_\FF$, thus, as~$W$ is uncountable, we may trim~$W$ further in order to ensure that there exists a term~$\st$ of~$\Sigma_\FF$ such that $\st_\xi=\st$ for each $\xi\in W$. Therefore, we have obtained that
 \begin{equation}\label{Eq:valphatalpha}
 v_\xi=\st(\txi)\quad\text{in }\cR_\xV(W)\,,\quad\text{for each }\xi\in W\,.
 \end{equation}
Denote by~$\se$ the term of $\Sigma_\FF$ defined by $\se(\sx)=\sx\cdot\st(\sx)$. In particular, from~\eqref{Eq:bunchinW} and~\eqref{Eq:valphatalpha} it follows that $e_\xi=\se(\txi)$ for each $\alpha\in W$.

{}From now on until the end of the proof, we shall fix $\alpha<\beta$ in~$W$. As the $\FF$-algebra with quasi-inversion~$R_\FF$ (with underlying ring $\FF^{3\times 3}$) belongs to the variety~$\xV$, as both~$A$ and~$B$ are idempotent with $A=BA$, and by the definition of~$\cR_\xV(W)$, there exists a unique $\Sigma_\FF$-homomorphism $\varphi\colon\cR_\xV(W)\to R_\FF$ such that
 \[
 \varphi(\txi)=\begin{cases}
 A\,,&\text{if }\xi\leq\alpha\,,\\
 B\,,&\text{otherwise},
 \end{cases}\quad\text{for each }\xi\in W\,.
 \]
By applying the homomorphism~$\varphi$ to the equation $v_\alpha=\st(\talpha)$, we obtain that $\varphi(v_\alpha)=\st(A)$ belongs to~$\FF[A]$ (because~$\FF[A]$ is a $\Sigma_\FF$-substructure of~$R_\FF$). Similarly, $\varphi(v_\beta)=\st(B)$ belongs to~$\FF[B]$. Using~\eqref{Eq:bunchinW}, it follows that 
 \begin{equation}\label{Eq:At(A)A}
 \varphi(e_\alpha)=\se(A)\,,\quad\varphi(e_\beta)=\se(B)\,,\quad
 A=A\cdot \st(A)\cdot A\,,\quad B=B\cdot \st(B)\cdot B\,.
 \end{equation}
{}From the third equation in~\eqref{Eq:At(A)A} it follows that $A\cdot\FF[A]=(A\cdot \st(A))\cdot\FF[A]=\se(A)\cdot\FF[A]$. As the only non-trivial idempotent elements of~$\FF[A]$ are~$A$ and~$I-A$, this leaves the only possibility~$\se(A)=A$. Similarly, $\se(B)=B$.

However, by applying the homomorphism~$\varphi$ to the relation~\eqref{Eq:alphbetutrW}, we obtain that $\se(A)\utr\se(B)$ in~$R_\FF$ (\emph{it is here that we really need the countability of~$\FF$, for we need $\st_\alpha=\st_\beta$}!), so $A\utr B$. In particular, $A=AB$, a contradiction.
\end{proof}

Proposition~\ref{P:NoBan} applies in particular to the case where~$\xV$ is the variety~$\xV_\FF$ generated by the algebra~$R_\FF$, that is, the class of all $\Sigma_\FF$-structures satisfying all the identities (in the similarity type~$\Sigma_\FF$) satisfied by~$R_\FF$.

The following result shows an additional property of the algebras~$\cR_\FF(\Lambda):=\cR_{\xV_\FF}(\Lambda)$. Recall that the \emph{index of nilpotence} of a nilpotent element~$a$ in a ring~$T$ is the least positive integer~$n$ such that $a^n=0$, and the \emph{index of~$T$} is the supremum of the indices of all elements of~$T$.

\begin{proposition}\label{P:Index3}
Every member of the variety~$\xV_\FF$ has index at most~$3$. In particular, the algebra~$\cR_\FF(\Lambda)$ has index at most~$3$, for every chain~$\Lambda$.
\end{proposition}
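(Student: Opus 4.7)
The plan is to exhibit a single identity in the language $\Sigma_\FF$, satisfied by $R_\FF$ and hence by every member of the variety $\xV_\FF$, that forces $a^3=0$ for every nilpotent~$a$. My candidate is
\[
x^3=x^4\cdot(x^4)'\cdot x^3,
\]
which is a genuine $\Sigma_\FF$-term identity. Granted this identity in $\xV_\FF$, the index bound will follow by an iteration internal to any $T\in\xV_\FF$.

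First I would verify the identity in $R_\FF$. The quasi-inversion on $R_\FF$ was defined so that the quasi-inverse of each element lies in the smallest of $\FF\cdot I$, $\FF[A]$, $\FF[B]$, $\FF^{3\times 3}$ containing it, so the check splits into four cases. The three cases involving $\FF\cdot I$, $\FF[A]$, $\FF[B]$ reduce to routine scalar computations against the trivial or orthogonal-idempotent decompositions $\{A,I-A\}$, $\{B,I-B\}$. The substantive case is $\FF^{3\times 3}$ itself: for any $X\in\FF^{3\times 3}$, the decreasing chain of column spaces $\im X\supseteq\im X^2\supseteq\im X^3\supseteq\im X^4\supseteq\cdots$ in the $3$-dimensional space $\FF^3$ must stabilize by the fourth term (the dimension can drop at most three times), so $\im X^3=\im X^4$. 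Via the bijection between principal right ideals of $\FF^{3\times 3}$ and column subspaces of $\FF^3$, this reads $X^3\cdot\FF^{3\times 3}=X^4\cdot\FF^{3\times 3}$. Setting $e:=X^4(X^4)'$, an idempotent with $e\cdot\FF^{3\times 3}=X^4\cdot\FF^{3\times 3}$, we deduce $X^3\in e\cdot\FF^{3\times 3}$, equivalently $eX^3=X^3$, which is precisely the identity. This matrix-case computation is the only delicate step: the exponent~$3$ matches the matrix size, and lowering it would fail since the regular $3\times 3$ Jordan block with eigenvalue~$0$ has index exactly~$3$.

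With the identity established throughout $\xV_\FF$, fix $T\in\xV_\FF$ and a nilpotent $a\in T$ with $a^n=0$. The case $n\leq 3$ is immediate, since then $a^3=a^{3-n}\cdot a^n=0$. For $n\geq 4$, set $y:=(a^4)'a^3$, so the identity reads $a^3=a^4 y$; left-multiplying by $a^k$ gives $a^{3+k}=a^{4+k}y$ for every $k\geq 0$, and iterating this substitution yields
\[
a^3=a^4 y=a^5 y^2=a^6 y^3=\cdots=a^{3+k}y^k
\]
for all $k\geq 0$. Taking $k:=n-3$ produces $a^3=a^n y^{n-3}=0$, as required. The final clause of the proposition follows at once, since $\cR_\FF(\Lambda)=\cR_{\xV_\FF}(\Lambda)\in\xV_\FF$ by construction.
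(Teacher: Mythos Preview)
Your proof is correct and takes a genuinely different route from the paper's.

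The paper argues externally: by Birkhoff's \textbf{HSP} Theorem, every member of~$\xV_\FF$ is a homomorphic image of a subalgebra of a power of~$R_\FF$; since the underlying ring of~$R_\FF$ is $\FF^{3\times 3}$, it has index~$3$ (cited from Goodearl), and the index bound is preserved under products, subrings, and homomorphic images of regular rings (also cited from Goodearl). Your approach is internal: you exhibit a single $\Sigma_\FF$-identity, $x^3=x^4(x^4)'x^3$, verify it in~$R_\FF$ by the column-space stabilization argument, and then show by an elementary iteration that this identity forces $a^3=0$ for every nilpotent~$a$ in any model. Your argument is more self-contained (no appeal to Goodearl's results on index under \textbf{HSP}) and makes explicit \emph{why} bounded index passes to the variety: it is encoded by an honest identity in the enriched language with quasi-inversion. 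The paper's argument is shorter and more conceptual, leaning on standard facts. One simplification you could note: since any quasi-inverse of~$X^4$ in a subring of~$\FF^{3\times 3}$ is also a quasi-inverse in~$\FF^{3\times 3}$, your Case~4 argument (which uses only that $X^4(X^4)'$ is an idempotent generating $X^4\cdot\FF^{3\times 3}$) already covers all four cases, so the separate treatment of $\FF\cdot I$, $\FF[A]$, $\FF[B]$ is not strictly needed.
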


\begin{proof}
By Birkhoff's \textbf{HSP} Theorem in Universal Algebra (see, for example, Theorems~9.5 and~11.9 in Burris and Sankappanavar~\cite{BuSa}), every member~$T$ of~$\xV_\FF$ is a $\Sigma_\FF$-homomorphic image of a $\Sigma_\FF$-substructure of a power of~$R_\FF$. As the underlying $\FF$-algebra of~$R_\FF$ is~$\FF^{3\times 3}$, it has index~$3$ (cf. \cite[Theorem~7.2]{Good91}), thus so does every power of~$R_\FF$, and thus also every subalgebra of every power of~$R_\FF$. As taking homomorphic images does not increase the index of regular rings (cf. \cite[Proposition~7.7]{Good91}), $T$ has index at most~$3$.
\end{proof}

\begin{remark}\label{Rk:NoBan1}
It follows from Proposition~\ref{P:Index3} that $\cR_\FF(\omega_1)$ has index at most~$3$ (it is not hard to see that it is exactly~$3$). In particular, by \cite[Corollary~7.11]{Good91}, $\cR_\FF(\omega_1)$ is unit-regular.

If~$\FF$ is finite, then more can be said. Set $R:=R_\FF$ for brevity. It follows from one of the proofs of Birkhoff's \textbf{HSP} Theorem that the free algebra~$F_n$ on~$n$ generators in the variety~$\xV_\FF$ is isomorphic to the $\Sigma_\FF$-substructure of~$R^{R^n}$ generated by the~$n$ canonical projections from~$R^n$ onto~$R$. In particular, $F_n$ is finite. It follows that the $\FF$-algebra with quasi-inversion~$\cR_\FF(\Lambda)$ is \emph{locally finite}.

To summarize, we have obtained that \emph{If~$\FF$ is a finite field, then $\cR_\FF(\omega_1)$ is a locally finite regular~$\FF$-algebra with index~$3$, but without a \Ban\ function}.
\end{remark}

\begin{remark}\label{Rk:IncromCh}
Part (a) of \cite[Proposition~2.13]{Good91} implies that for every increasing sequence (indexed by the non-negative integers) $\famm{I_n}{n<\omega}$ of principal right ideals in a unital regular ring~$R$, there exists a $\utr$-increasing sequence $\famm{e_n}{n<\omega}$ of idempotents of~$R$ such that $I_n=e_nR$ for each~$n<\omega$. The origin of this argument can be traced back to Kaplansky's proof that every countably generated right ideal in a regular ring is projective \cite[Lemma~1]{Kapl58}.

Proposition~\ref{P:NoBan} implies that the result above cannot be extended to $\omega_1$-sequences of principal right ideals, even if the ring~$R$ has bounded index by Proposition~\ref{P:Index3}.

Observe that Kaplansky finds in~\cite{Kapl58} a non-projective (uncountable) right ideal in a regular ring. Another example, suggested to the author by Luca Giudici, runs as follows. Let~$X$ be a locally compact, Hausdorff, non paracompact zero-dimensional space. A classical example of such a space is given by the closed subspace of Dieudonn\'e's long ray consisting of the first uncountable ordinal~$\omega_1$ endowed with its order topology (all intervals of the form either~$\omega_1\dnw\alpha$ or $\omega_1\upw\alpha$, for $\alpha<\omega_1$, form a basis of closed sets of the topology). Now let~$Y$ be the one-point compactification of~$X$. Denote by~$B$ the Boolean algebra of all clopen subsets of~$Y$, and by~$I$ the ideal of~$B$ consisting of all the clopen subsets of~$X$. Then~$B$ is a commutative regular ring and~$I$ is a non-projective ideal of~$B$ (cf. Bkouche~\cite{Bkou70}, Finney and Rotman~\cite{FiRo70}). In the particular case where~$X$ is the example above, $I$ is the union of the increasing chain of principal ideals corresponding to the intervals $[0,\alpha]$, for $\alpha<\omega_1$.

However, we do not know any relation, beyond the formal analogy outlined above, between projectivity of ideals and existence of \Ban\ functions. In particular, while Kaplansky's construction in~\cite{Kapl58} is given as an algebra over \emph{any} field~$\FF$, the construction of our counterexample in Section~\ref{S:CoordAl1} requires~$\FF$ be \emph{countable}. Moreover, in Giudici's example above, the identity function on~$B$ is a \Ban\ function on (the ring)~$B$.
\end{remark}

\section{\Ban\ measures on subsets of lattices with zero}\label{S:BanMeas}

In order to reach our final coordinatization failure result (Theorem~\ref{T:4/5NonCoord}) we need the following variant of \Ban\ functions, introduced in~\cite[Definition~5.5]{BanFct1}.

\begin{definition}\label{D:BanMeas}
Let $X$ be a subset in a lattice~$L$ with zero. A \emph{$L$-valued \Ban\ measure on~$X$} is a map $\ominus\colon X^{[2]}\to L$, $(x,y)\mapsto y\ominus x$, isotone in~$y$ and antitone in~$x$, such that $y=x\oplus(y\ominus x)$ for all $x\leq y$ in~$X$.
\end{definition}

The following lemma gives us an equivalent definition in case~$L$ is \emph{modular}.

\begin{lemma}\label{L:BanMeasMod}
Let $X$ be a subset in a modular lattice~$L$ with zero. Then a map $\ominus\colon X^{[2]}\to L$ is a \Ban\ measure if and only if
 \begin{equation}\label{Eq:EquivBanMeas}
 y=x\oplus(y\ominus x)\quad\text{and}\quad
 z\ominus x=(z\ominus y)\oplus(y\ominus x)\,,\quad
 \text{for all }x\leq y\leq z\text{ in }X\,.
 \end{equation}
Furthermore, if this holds, then
 \begin{equation}\label{Eq:Measxyfromz}
 y\ominus x=y\wedge(z\ominus x)\,,\quad\text{for all }
 x\leq y\leq z\text{ in }X\,. 
 \end{equation}
\end{lemma}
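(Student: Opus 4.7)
The plan is to prove the two directions of the equivalence separately, using the modularity of $L$ essentially only for the forward direction and the ``furthermore'' clause. The backward direction will be almost immediate, once one notices that the identity $z\ominus x=(z\ominus y)\oplus(y\ominus x)$ forces both $y\ominus x\leq z\ominus x$ and $z\ominus y\leq z\ominus x$ (since each summand of an independent sum lies below the sum), which are exactly the isotonicity in the upper variable and antitonicity in the lower variable demanded by Definition~\ref{D:BanMeas}; the equation $y=x\oplus(y\ominus x)$ is common to both formulations.

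For the forward direction, fix $x\leq y\leq z$ in $X$. From $y=x\oplus(y\ominus x)$ and $z=y\oplus(z\ominus y)$ one obtains by associativity of independent sums in modular lattices (cf.\ \cite[Section~II.1]{Maed58}) that $z=x\oplus(y\ominus x)\oplus(z\ominus y)$; in particular the join $u:=(y\ominus x)\oplus(z\ominus y)$ is independent and satisfies $x\oplus u=z=x\oplus(z\ominus x)$. Antitonicity in the lower argument and isotonicity in the upper argument yield $y\ominus x\leq z\ominus x$ and $z\ominus y\leq z\ominus x$, hence $u\leq z\ominus x$. The cancellation step is then the main (and only) nontrivial point: since $u\leq z\ominus x$, $x\oplus u=x\oplus(z\ominus x)$, and $x\wedge(z\ominus x)=0$, the modular law gives
\[
z\ominus x=(z\ominus x)\wedge(x\oplus u)=\bigl((z\ominus x)\wedge x\bigr)\oplus u=u\,,
\]
which is the required identity. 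This cancellation-inside-an-independent-sum argument is the step I expect to be the main (very minor) obstacle, as it is the only place where modularity is really used in this direction.

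Finally, for the ``furthermore'' clause, assume \eqref{Eq:EquivBanMeas} and fix $x\leq y\leq z$ in $X$. Substituting $z\ominus x=(z\ominus y)\oplus(y\ominus x)$ into $y\wedge(z\ominus x)$, and using $y\ominus x\leq y$ together with $y\wedge(z\ominus y)=0$ (which follows from the independence in $z=y\oplus(z\ominus y)$), the modular law applied with $y\ominus x$ as the element lying below $y$ yields
\[
y\wedge(z\ominus x)=y\wedge\bigl((z\ominus y)\oplus(y\ominus x)\bigr)=\bigl(y\wedge(z\ominus y)\bigr)\oplus(y\ominus x)=y\ominus x\,,
\]
which is \eqref{Eq:Measxyfromz}. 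This completes the proof plan.
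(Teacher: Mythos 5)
Your proposal is correct and follows essentially the same route as the paper: the backward direction is read off from the fact that each summand of the independent sum $(z\ominus y)\oplus(y\ominus x)$ lies below $z\ominus x$, and the forward direction forms $u:=(y\ominus x)\oplus(z\ominus y)$ by associativity of $\oplus$, notes $u\leq z\ominus x$ from monotonicity, and cancels the two comparable sectional complements of $x$ in $z$ by modularity (the paper quotes this cancellation directly, while you spell it out as $(z\ominus x)\wedge(x\vee u)=\bigl((z\ominus x)\wedge x\bigr)\vee u$, which is the same computation). The only minor divergence is in the ``furthermore'' clause, where you substitute $z\ominus x=(z\ominus y)\oplus(y\ominus x)$ and apply the modular law once with $y\ominus x\leq y$, whereas the paper shows directly from the measure axioms that $v:=y\wedge(z\ominus x)$ is a sectional complement of $x$ in $y$ lying above $y\ominus x$ and cancels; both are correct one-step modular arguments of the same length.
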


\begin{proof}
Condition \eqref{Eq:EquivBanMeas} trivially implies that~$\ominus$ is a \Ban\ measure on~$X$. Conversely, assume that~$\ominus$ is a \Ban\ measure on~$X$, and let $x\leq y\leq z$ in~$X$. The equality $y=x\oplus(y\ominus x)$ follows from the definition of a \Ban\ measure. As, in addition, $z=y\oplus(z\ominus y)$ and from the associativity of the partial operation~$\oplus$ (which follows from the modularity of~$L$), it follows that $z=x\oplus u$ where $u:=(z\ominus y)\oplus(y\ominus x)$. Hence both~$u$ and $z\ominus x$ are sectional complements of~$x$ in~$z$ with $u\leq z\ominus x$, whence, by the modularity of~$L$, $u=z\ominus x$. This concludes the proof of the first equivalence.

Now assume that~$\ominus$ is a \Ban\ measure on~$X$, let $x\leq y\leq z$ in~$X$, and set $v:=y\wedge(z\ominus x)$. Trivially, $x\wedge v=0$. Furthermore, as $x\leq y$ and by the modularity of~$L$,
 \[
 x\vee v=y\wedge\bigl(x\vee(z\ominus x)\bigr)=y\wedge z=y\,.
 \]
Therefore, $x\oplus(y\ominus x)=y=x\oplus v$, thus, as $y\ominus x\leq v$ and~$L$ is modular, $v=y\ominus x$.
\end{proof}

\begin{lemma}\label{L:Transfer}
Let $L$ be a modular lattice with zero, let $e,b\in L$ such that $e\oplus b=1$, and let $X\subseteq L\dnw b$. If there exists an $L$-valued \Ban\ function on~$e\oplus X:=\setm{e\oplus x}{x\in X}$, then there exists a $(L\dnw b)$-valued \Ban\ function on~$X$.
\end{lemma}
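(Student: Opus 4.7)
The plan is to define $g\colon X \to L\dnw b$ explicitly from the given \Ban\ function $f$ on $e\oplus X$ and to verify the three defining properties: that $g$ takes values in $L\dnw b$, that $x\oplus g(x)=b$ for each $x\in X$, and that $g$ is antitone. My candidate is
\[
g(x):=b\wedge\bigl(e\vee f(e\oplus x)\bigr)\,.
\]

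The structural observation underlying this choice is that for each $x\in X$, the three elements $e$, $x$, $f(e\oplus x)$ form an independent family in $L$ with join~$1$. Indeed, $e\wedge x\leq e\wedge b=0$ makes $e\oplus x$ legitimate, and $(e\oplus x)\oplus f(e\oplus x)=1$ by definition of $f$. Once this is in hand, both nontrivial conditions reduce to routine applications of modularity. For the complementarity, $x\leq b$ lets $x\wedge g(x)$ collapse to $x\wedge(e\vee f(e\oplus x))$, which vanishes by independence, while the same inclusion permits the modular law to pull $x$ across the meet with $b$, giving $x\vee g(x)=b\wedge(x\vee e\vee f(e\oplus x))=b\wedge 1=b$. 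Antitonicity of $g$ is a straight chase: $x\leq x'$ implies $e\oplus x\leq e\oplus x'$, so antitonicity of $f$ yields $f(e\oplus x)\geq f(e\oplus x')$, and joining with~$e$ and meeting with~$b$ preserve this inequality.

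The only place that needs a moment's thought is the choice of formula. The naive candidate $b\wedge f(e\oplus x)$ is too small: the complement $f(e\oplus x)$ may partly live inside the $e$-direction and be killed by the meet with $b$, as one sees already in $M_3$ with $e$, $b$ two distinct atoms and the chosen complement pointing to the third atom. Putting $e$ back in before cutting down to $L\dnw b$ is what repairs this, and from there the verification is purely formal.
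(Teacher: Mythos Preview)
Your proof is correct and uses exactly the same construction as the paper: the paper defines the transferred object via $b\wedge\bigl(e\vee(\cdot)\bigr)$ applied to the given data, and verifies complementarity by the same two modularity computations you give. One remark: the word ``function'' in the lemma statement is a slip for ``measure'' (as is clear from the paper's own proof and from how the lemma is invoked later), so the paper actually proves the two-variable version $y\ominus' x:=b\wedge\bigl(e\vee((e\oplus y)\ominus(e\oplus x))\bigr)$; your argument adapts verbatim to that setting, with antitonicity in~$x$ and isotonicity in~$y$ following just as in your last paragraph.
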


\begin{proof}
By assumption, there exists an $L$-valued \Ban\ measure $\ominus$ on $e\oplus X$. We set
 \[
 y\ompr x:=
 b\wedge\bigl[e\vee\bigl((e\oplus y)\ominus(e\oplus x)\bigr)\bigr]\,,
 \quad\text{for all }x\leq y\text{ in }X\,.
 \]
Clearly, the map $\ompr$ thus defined is $(L\dnw b)$-valued, and  isotone in~$y$ while antitone in~$x$. For all $x\leq y$ in~$X$, it follows from the equation $e\oplus y=e\oplus x\oplus\bigl((e\oplus y)\ominus(e\oplus x)\bigr)$ and the modularity of~$L$ that
 \[
 x\wedge\bigl[e\vee\bigl((e\oplus y)\ominus(e\oplus x)\bigr)\bigr]=0\,,
 \]
so, as $x\leq b$, we get $x\wedge(y\ompr x)=0$. On the other hand,
 \begin{align*}
 x\vee(y\ompr x)&=
 b\wedge\bigl[x\vee e\vee\bigl((e\oplus y)\ominus(e\oplus x)\bigr)\bigr]
 &&(\text{because }x\leq b\text{ and }L\text{ is modular})\\
 &=b\wedge(e\vee y)\\
 &=(b\wedge e)\vee y&&
 (\text{because }y\leq b\text{ and }L\text{ is modular})\\
 &=y\,,
 \end{align*}
so $x\oplus(y\ompr x)=y$.
\end{proof}

\section{An $\omega_1$-sequence without a \Ban\ measure}\label{S:NonBanMeas}

Throughout this section we shall use the notation of Section~\ref{S:CoordAl1}. A term~$\st$ of a similarity type containing $\Sigma:=(0,-,\cdot,')$ is \emph{strongly idempotent} if either $\st=\su\cdot\su'$ or $\st=\su'\cdot\su$ for some term~$\su$ of~$\Sigma$. We define strongly idempotent terms~$\sk$ and~$\sm$ of~$\Sigma$ by
 \begin{align}
 \sk(\sx,\sy)&:=(\sy\sy'-\sx\sx'\sy\sy')'\cdot(\sy\sy'-\sx\sx'\sy\sy')\,,
 \label{Eq:Defse}\\
 \sm(\sx,\sy)&:=\bigl(\sy\sy'-\sy\sy'\sk(\sx,\sy)\bigr)\cdot
 \bigl(\sy\sy'-\sy\sy'\sk(\sx,\sy)\bigr)'\,.\label{Eq:Defsm}
 \end{align}
We shall need the following lemma, that follows immediately from the trivial fact that $xx'R=xR$ for any element~$x$ with quasi-inverse~$x'$ in a regular ring~$R$, together with \cite[Section~3.2]{FrHa54}.

\begin{lemma}\label{L:Pmeet}
The equality $xR\cap yR=\sm(x,y)R$ holds, for any elements~$x$ and~$y$ in a regular ring~$R$ with quasi-inversion.
\end{lemma}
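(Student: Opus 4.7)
The plan is to reduce to idempotent generators and then identify $\sm(x,y)$ as the ``meet idempotent'' in $\LL(R)$. First, set $e := xx'$ and $f := yy'$; both are idempotents, and by the stated trivial identity $eR = xR$ and $fR = yR$, so the problem becomes $eR \cap fR = \sm(x,y)R$. With $a := f - ef$, the definitions~\eqref{Eq:Defse} and~\eqref{Eq:Defsm} read $\sk(x,y) = a'a$ and $\sm(x,y) = bb'$ for $b := f - fa'a$. A further use of the trivial identity gives $\sm(x,y)R = bR$, so the task reduces to proving $bR = eR \cap fR$.

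Next I would invoke two standard observations, both already in \cite[Section~3.2]{FrHa54}: for an idempotent $g\in R$, membership in $gR$ is the fixed-point condition $gr = r$; and for any element $a$ with quasi-inverse $a'$, the right annihilators of $a$ and of $a'a$ coincide (since $ar = aa'ar$). Since $ar = (f-ef)r = r - er$ whenever $fr = r$, these combine to yield
\[
eR \cap fR = \setm{r \in R}{fr=r\text{ and }a'ar=0}\,.
\]
A short direct computation then identifies this set with $bR$. From $af = a$ one has $a'a \in Ra \subseteq Rf$, hence $a'a\cdot f = a'a$, which in turn yields both $fb = b$ and $a'a\cdot b = 0$; therefore $br$ lies in the intersection for every $r\in R$, while conversely $br = r - f(a'ar) = r$ whenever $fr = r$ and $a'ar = 0$.

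The one conceptual step here is the translation of the lattice-theoretic meet into quasi-inverse algebra, which is precisely what \cite[Section~3.2]{FrHa54} packages; once it is available, everything else reduces to short algebraic identities following from $e^2 = e$, $f^2 = f$, $aa'a = a$, and the idempotence of $a'a$.
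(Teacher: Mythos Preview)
Your proof is correct and follows essentially the same approach as the paper, which simply cites the identity $xx'R=xR$ together with \cite[Section~3.2]{FrHa54}; you have unpacked that reference into the explicit idempotent computation. Nothing more needs to be added.
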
 

Until the statement of Theorem~\ref{T:NoBanMeas} we shall fix a countable field~$\FF$ and a variety~$\xV$ of regular $\FF$-algebras with quasi-inversion. We shall denote by $\cL_\xV:=\LL\circ\cR_\xV$ the composite functor (from~$\Ch$ to the category of all sectionally complemented modular lattices with $0$-lattice homomorphisms).

A subset $S$ in a chain~$\Lambda$ is a \emph{support} of an element $I\in\cL_\xV(\Lambda)$ if~$I$ belongs to the range of $\cL_\xV(e_S^\Lambda)$. Equivalently, $I=x\cdot\cR_\xV(\Lambda)$ for some $x\in\cR_\xV(\Lambda)$ with support~$S$.

\begin{lemma}\label{L:ToSmSupp}
Let $\Lambda$ be a chain, let $I\in\cL_\xV(\Lambda)$, let $X\subseteq\Lambda$, and let $\xi\in\Lambda$. If both~$X$ and~$\Lambda\setminus\set{\xi}$ support~$I$, then $X\setminus\set{\xi}$ supports~$I$.
\end{lemma}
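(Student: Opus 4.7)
The case $\xi\notin X$ is immediate, since then $X\setminus\{\xi\}=X$ already supports~$I$, so we assume $\xi\in X$. The plan has two parts: first reduce to the case where~$\Lambda$ is finite, then argue by induction on the cardinality of $\Lambda\setminus X$.

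For the reduction to finite, we use that $\cR_\xV$ preserves directed colimits, so every element of~$\cR_\xV(\Lambda)$ has a finite support: pick finite $S_X\subseteq X$ and finite $S_Z\subseteq\Lambda\setminus\{\xi\}$ that both support~$I$, and set $\Lambda_0:=S_X\cup S_Z\cup\{\xi\}$. Since $\Lambda_0$ is finite, the map $r\colon\Lambda\sqcup\{\zb,\ob\}\to\Lambda_0\sqcup\{\zb,\ob\}$ sending each~$\alpha$ to the largest element of $\Lambda_0\cup\{\zb\}$ that lies below~$\alpha$ (with $r(\ob)=\ob$) is a well-defined morphism in~$\Ch$ and a left inverse of $e_{\Lambda_0}^\Lambda$. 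Hence $\cL_\xV(e_{\Lambda_0}^\Lambda)$ is split injective, so~$I$ lifts to a unique $\bar I\in\cL_\xV(\Lambda_0)$ that inherits both supports $S_X\subseteq X\cap\Lambda_0$ and $S_Z\subseteq\Lambda_0\setminus\{\xi\}$. It will then suffice to prove the finite version of the lemma, applied to~$\bar I$ with $X$ replaced by $X\cap\Lambda_0$: the resulting support $(X\cap\Lambda_0)\setminus\{\xi\}$ of~$\bar I$ lifts to a support of~$I$ contained in $X\setminus\{\xi\}$.

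In the finite case, we induct on $|\Lambda\setminus X|$. The base case $|\Lambda\setminus X|=0$ is vacuous, since then $\Lambda=X$ and the hypothesis that $\Lambda\setminus\{\xi\}$ supports~$I$ is the desired conclusion. Otherwise pick $\eta\in\Lambda\setminus X$ (necessarily distinct from~$\xi$) and construct an isotone retraction $h_\eta\colon\Lambda\to\Lambda\setminus\{\eta\}$ in~$\Ch$ that is the identity off~$\eta$: send~$\eta$ to~$\zb$ if it is the minimum of~$\Lambda$, to~$\ob$ if it is the maximum, and otherwise to whichever of its immediate predecessor~$\eta^-$ or successor~$\eta^+$ in~$\Lambda$ is different from~$\xi$. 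Set $\Lambda':=\Lambda\setminus\{\eta\}$ and $I':=\cL_\xV(h_\eta)(I)$. Writing $I=\cL_\xV(e_X^\Lambda)(I_X)$, the identity $h_\eta\circ e_X^\Lambda=e_X^{\Lambda'}$ (valid since $\eta\notin X$) gives $I'=\cL_\xV(e_X^{\Lambda'})(I_X)$, which shows simultaneously that $X$ supports~$I'$ in~$\Lambda'$ and that $\cL_\xV(e_{\Lambda'}^\Lambda)(I')=I$. Writing $I=\cL_\xV(e_{\Lambda\setminus\{\xi\}}^\Lambda)(I_Z)$, the composite $h_\eta\circ e_{\Lambda\setminus\{\xi\}}^\Lambda$ takes values in $(\Lambda'\setminus\{\xi\})\sqcup\{\zb,\ob\}$—precisely because $h_\eta(\eta)\neq\xi$—so $\Lambda'\setminus\{\xi\}$ also supports~$I'$. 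Since $|\Lambda'\setminus X|=|\Lambda\setminus X|-1$, the inductive hypothesis applied to~$I'$ yields that $X\setminus\{\xi\}$ supports~$I'$ in~$\Lambda'$, and pushing this support through $\cL_\xV(e_{\Lambda'}^\Lambda)$ concludes.

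The only delicate point is the choice of $h_\eta(\eta)$ in the inductive step: we must avoid landing on~$\xi$, which is always possible because at most one of $\eta^-$ and~$\eta^+$ can equal~$\xi$. The subsidiary technicality—that the reduction to finite~$\Lambda_0$ genuinely transfers both supports to~$\bar I$—is handled by the split injectivity of $\cL_\xV(e_{\Lambda_0}^\Lambda)$ supplied by the retraction~$r$.
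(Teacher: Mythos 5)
Your proof is correct, and its engine is the same one the paper uses---push $I$ through $\cL_\xV$ of isotone maps in~$\Ch$ that fix~$X$ pointwise (hence fix~$I$, since~$X$ supports~$I$) while sending $\Lambda\setminus\set{\xi}$ into $(X\setminus\set{\xi})\cup\set{\zb,\ob}$---but the execution differs. You proceed in two stages: a reduction to a finite ambient chain~$\Lambda_0$ via the split injection furnished by your retraction~$r$, followed by an induction on $\card(\Lambda\setminus X)$ that collapses one point $\eta\in\Lambda\setminus X$ at a time onto a neighbour distinct from~$\xi$. The paper instead shrinks only~$X$ to a finite support and then applies a \emph{single} isotone map $f\colon\Lambda\to\Lambda\sqcup\set{\zb,\ob}$: it fixes~$\xi$, sends each $\eta>\xi$ to the least element of~$X$ above it (or to~$\ob$ if there is none), and each $\eta<\xi$ to the largest element of~$X$ below it (or to~$\zb$); then $f\res_X=\id_X$ gives $\cL_\xV(f)(I)=I$, while $f(\Lambda\setminus\set{\xi})\subseteq(X\setminus\set{\xi})\cup\set{\zb,\ob}$ gives the conclusion at once. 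The one-shot map buys the absence of any induction, no need for the ambient chain to be finite, and no appeal to immediate predecessors or successors; your version is longer but each step is elementary, and your local collapses $h_\eta$ compose, in effect, into a retraction of the same kind as the paper's~$f$. Two terse points in your write-up are fine but worth a sentence each: the inference ``$h_\eta\circ e_{\Lambda\setminus\set{\xi}}^\Lambda$ takes values in $(\Lambda'\setminus\set{\xi})\sqcup\set{\zb,\ob}$, hence $\Lambda'\setminus\set{\xi}$ supports~$I'$'' uses that a $\Ch$-morphism whose image lies in $S\cup\set{\zb,\ob}$ factors through~$e_S^{\Lambda'}$ (the argument of Lemma~\ref{L:ImSupp}); and the ``inheritance'' of the supports $S_X$ and~$S_Z$ by~$\bar I$ is exactly the uniqueness coming from the split injectivity of $\cL_\xV(e_{\Lambda_0}^\Lambda)$, which you do flag. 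Neither is a gap.
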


\begin{proof}
As some finite subset of~$X$ is a support of~$I$, we may assume that~$X$ is finite. Moreover, the conclusion is trivial in case $\xi\notin X$, so we may assume that $\xi\in X$. Let $f\colon\Lambda\to\Lambda\sqcup\set{\zb,\ob}$ defined by
 \[
 f(\eta):=\begin{cases}
 \xi\,,&\text{if }\eta=\xi\,,\\
 \eta^X\,,&\text{if }\eta>\xi\text{ and }\eta\in\Lambda\dnw X\,,\\
 \ob\,,&\text{if }\eta>\xi\text{ and }\eta\notin\Lambda\dnw X\,,\\
 \eta_X\,,&\text{if }\eta<\xi\text{ and }\eta\in\Lambda\upw X\,,\\
 \zb\,,&\text{if }\eta<\xi\text{ and }\eta\notin\Lambda\upw X
 \end{cases}
 \]
(we refer the reader to Section~\ref{Su:PosLat} for the notations~$\eta^X$, $\eta_X$).
Evidently, $f$ is isotone. In particular, $\cL_\xV(f)$ is an endomorphism of~$\cL_\xV(\Lambda)$.

{}From $f\res_X=\id_X$ and the assumption that~$X$ is a support of~$I$ it follows that $\cL_\xV(f)(I)=I$. On the other hand, as $\Lambda\setminus\set{\xi}$ is a support of~$I$ and $f(\Lambda\setminus\set{\xi})$ is contained in $(X\setminus\set{\xi})\cup\set{\zb,\ob}$, $X\setminus\set{\xi}$ is a support of $\cL_\xV(f)(I)$ (as in the proof of Lemma~\ref{L:ImSupp}). The conclusion follows.
\end{proof}

As every element of $\cL_\xV(\Lambda)$ has a finite support, we obtain immediately the following.

\begin{corollary}\label{C:SmSupp}
Let $\Lambda$ be a chain. Then every element $I\in\cL_\xV(\Lambda)$ has a smallest \pup{for containment} support, that we shall denote by $\supp I$ and call \emph{the support of~$I$}. Furthermore, $\supp I$ is finite.
\end{corollary}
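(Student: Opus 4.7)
The plan is to define $\supp I$ as the intersection of all supports of $I$ in $\Lambda$, and then to verify that this intersection is itself a (necessarily finite) support of~$I$. Minimality and uniqueness will be immediate from the definition.

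First I would record the upward closure of the family of supports: if $S\subseteq S'\subseteq\Lambda$ and $S$ supports~$I$, then so does~$S'$. This follows from the factorization $e_S^\Lambda=e_{S'}^\Lambda\circ e_S^{S'}$ in~$\Ch$, which upon applying the functor~$\cL_\xV$ shows that the range of $\cL_\xV(e_S^\Lambda)$ is contained in the range of $\cL_\xV(e_{S'}^\Lambda)$. In particular, for any $\xi\in\Lambda$, if some support of~$I$ omits~$\xi$, then $\Lambda\setminus\set{\xi}$ is itself a support of~$I$.

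Next, by the discussion immediately preceding the corollary, I would fix a finite support~$X_0$ of~$I$. Since $\supp I\subseteq X_0$, this already gives finiteness of~$\supp I$. For every $\xi\in X_0\setminus\supp I$, the definition of~$\supp I$ produces some support of~$I$ that omits~$\xi$, and the upward-closure observation then promotes this to the statement that $\Lambda\setminus\set{\xi}$ is a support of~$I$. The core of the argument is now a finite iteration of Lemma~\ref{L:ToSmSupp}: enumerate the finite set $X_0\setminus\supp I$ as $\set{\xi_1,\dots,\xi_k}$ and put $X_i:=X_{i-1}\setminus\set{\xi_i}$ for $1\leq i\leq k$. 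At stage~$i$, both~$X_{i-1}$ (by induction hypothesis) and $\Lambda\setminus\set{\xi_i}$ support~$I$, so Lemma~\ref{L:ToSmSupp} yields that~$X_i$ supports~$I$. After~$k$ steps, $X_k=X_0\cap\supp I=\supp I$ is a support, completing the argument.

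I do not anticipate any serious obstacle here: the real combinatorial content is already packaged inside Lemma~\ref{L:ToSmSupp}, and the only conceptual step is to combine it with the trivial upward-closure of supports in order to whittle a chosen finite support down to~$\supp I$ one index at a time. The mildest nuisance is making sure the ``omit one element'' supports $\Lambda\setminus\set{\xi_i}$ are produced \emph{before} the iteration starts (rather than being sought inductively), so that each application of Lemma~\ref{L:ToSmSupp} is immediately legitimate.
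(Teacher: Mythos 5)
Your argument is correct and is precisely the reasoning the paper leaves implicit: the corollary is stated as an immediate consequence of the finiteness of supports together with Lemma~\ref{L:ToSmSupp}, and your whittling-down iteration (using the trivial upward closure of supports to legitimize each application of the lemma) is exactly the intended way to make that ``immediately'' explicit. No gaps.
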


We can now prove the main result of this section. The $\FF$-algebra with quasi-inversion~$R_\FF$ is defined in Section~\ref{S:CoordAl1} (cf. Figure~\ref{Fig:CommSq}).

\begin{theorem}\label{T:NoBanMeas}
Let $\FF$ be a countable field and let $\xV$ be a variety of $\FF$-algebras with quasi-inversion containing~$R_\FF$ as an element. Then there exists no $\cL_\xV(\omega_1)$-valued \Ban\ measure on the subset $\cX_\FF:=\setm{\txi\cdot\cR_\xV(\omega_1)}{\xi<\omega_1}$.
\end{theorem}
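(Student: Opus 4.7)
The plan is to adapt the argument of Proposition~\ref{P:NoBan} to the \Ban-measure setting, using Lemma~\ref{L:DecompId} to convert each piece of measure data into idempotent and quasi-inverse data in the ring $R:=\cR_\xV(\omega_1)$. Write $I_\xi:=\txi R$, and suppose for a contradiction that $\ominus$ is a $\cL_\xV(\omega_1)$-valued \Ban\ measure on~$\cX_\FF$. For every pair $\xi<\eta$ in~$\omega_1$, the direct-sum decomposition $I_\eta=I_\xi\oplus(I_\eta\ominus I_\xi)$, applied together with Lemma~\ref{L:DecompId} to the idempotent generator $e_\eta:=\teta(\teta)'$ of~$I_\eta$, yields unique orthogonal idempotents $p_\xi^\eta,q_\xi^\eta$ summing to~$e_\eta$, with $p_\xi^\eta R=I_\xi$ and $q_\xi^\eta R=I_\eta\ominus I_\xi$. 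Since $q_\xi^\eta\txi\in q_\xi^\eta R\cap I_\xi=0$, writing $p_\xi^\eta=\txi u_\xi^\eta$ exhibits $u_\xi^\eta$ as a quasi-inverse of~$\txi$.

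Specializing to the diagonal $\eta=\xi+1$, set $u_\xi:=u_\xi^{\xi+1}$, $q_\xi:=q_\xi^{\xi+1}$, and choose a finite support $S_\xi\subseteq\omega_1$ of the pair $(u_\xi,q_\xi)$ containing $\set{\xi,\xi+1}$. Lemma~\ref{L:Deltaplus} then yields an uncountable $W\subseteq\omega_1$ with root~$Z$, and I build the retraction $f\colon\omega_1\to W\sqcup\set{\zb}$ exactly as in the proof of Proposition~\ref{P:NoBan}, so that $f\res W=\id_W$, $f(Z)=\set{\zb}$, and $f$ is constantly~$\xi$ on $S_\xi\setminus Z$. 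By Lemma~\ref{L:ImSupp}, the images $v_\xi:=\cR_\xV(f)(u_\xi)$ and $r_\xi:=\cR_\xV(f)(q_\xi)$ have support~$\set{\xi}$ in~$\cR_\xV(W)$, so $v_\xi=\st_\xi(\txi)$ and $r_\xi=\sr_\xi(\txi)$ for suitable $\Sigma_\FF$-terms $\st_\xi,\sr_\xi$. Countability of~$\Sigma_\FF$ and a further uncountable trimming of~$W$ let me assume $\st_\xi=\st$ and $\sr_\xi=\sr$ are constant across~$\xi\in W$.

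Transporting the identities $\txi u_\xi\txi=\txi$, $p_\xi q_\xi=q_\xi p_\xi=0$, $p_\xi+q_\xi=\widetilde{\xi+1}(\widetilde{\xi+1})'$, and the idempotence of $p_\xi,q_\xi$ through $\cR_\xV(f)$---noting that $f(\xi+1)=\xi$, so $\cR_\xV(f)(\widetilde{\xi+1})=\txi$---yields, for each $\xi\in W$, universal-style relations linking $\txi$, $\st(\txi)$ and $\sr(\txi)$. For any fixed $\alpha<\beta$ in the trimmed~$W$, the $\Sigma_\FF$-homomorphism $\varphi\colon\cR_\xV(W)\to R_\FF$ defined by $\txi\mapsto A$ for $\xi\leq\alpha$ and $\txi\mapsto B$ otherwise then forces, via the $\FF[A],\FF[B]$ idempotent analysis used at the end of the proof of Proposition~\ref{P:NoBan}, the equalities $\st(A)=A$, $\st(B)=B$, and $\sr(A)=\sr(B)=0$.

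The main obstacle is closing the contradiction, since the diagonal identities derived so far are mutually consistent in~$R_\FF$: the collapse $f(\xi+1)=\xi$ kills $L_\xi^{\xi+1}$ on the image side, so the quasi-inverse data alone do not yet yield a contradiction. The missing ingredient is a non-diagonal piece of the measure. Equation~\eqref{Eq:EquivBanMeas} applied to $\alpha<\alpha+1\leq\beta$ gives $I_\beta\ominus I_\alpha=(I_\beta\ominus I_{\alpha+1})\oplus q_\alpha R$, exhibiting $q_\alpha$ as a direct summand of an idempotent generator of $I_\beta\ominus I_\alpha$. One enlarges each initial support $S_\xi$ to contain also the support of a second layer of measure data $(u_\xi^\eta,q_\xi^\eta)$ for a second ordinal $\eta>\xi+1$, re-runs Lemma~\ref{L:Deltaplus} and the countability trimming so that this layer too is captured by a fixed $\Sigma_\FF$-term after restricting to pairs both of whose endpoints survive in~$W$, and transports the coherence identity into~$\cR_\xV(W)$. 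Under~$\varphi$, it collapses---exactly as in the final step of the proof of Proposition~\ref{P:NoBan}---to the relation $A\utr B$ in~$R_\FF$, which together with $A=BA\ne AB$ is the desired contradiction.
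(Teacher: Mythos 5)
Your diagonal analysis (extracting, via Lemma~\ref{L:DecompId}, quasi-inverses and orthogonal idempotents from the decompositions $I_{\xi+1}=I_\xi\oplus(I_{\xi+1}\ominus I_\xi)$, then running the $\Delta$-lemma and term-counting argument) is sound as far as it goes, but—as you yourself note—it produces only mutually consistent relations in~$R_\FF$, and the step you propose for closing the contradiction does not work as described. The real difficulty is that the measure data is indexed by \emph{pairs}: the support of (a generating idempotent of) $I_\beta\ominus I_\alpha$ depends on both $\alpha$ and~$\beta$, and enlarging each~$S_\xi$ to absorb the data of $(u_\xi^\eta,q_\xi^\eta)$ ``for a second ordinal $\eta>\xi+1$'' only captures the measure on the particular pairs $(\xi,\eta(\xi))$ you chose. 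After the collapse~$f$, that second layer says nothing about $I_\beta\ominus I_\alpha$ for an \emph{arbitrary} pair $\alpha<\beta$ in the surviving uncountable set~$W$: a one-parameter application of Lemma~\ref{L:Deltaplus} cannot uniformize a two-parameter family, and nothing in your argument shows that $I_\beta\ominus I_\alpha$ is expressible by a fixed term in~$\talpha,\tbeta$ (its support could a priori involve ordinals unrelated to $\alpha$ and~$\beta$). Consequently the claimed collapse of the coherence identity ``to $A\utr B$ under~$\varphi$'' is not justified; that is precisely the missing argument, not a routine re-run of the final step of Proposition~\ref{P:NoBan}.

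The paper fills exactly this gap by a support-localization argument followed by a reduction to Proposition~\ref{P:NoBan} rather than a re-proof of it. For each fixed~$\alpha$ one applies Lemma~\ref{L:Deltaplus} to the family $\famm{S_{\alpha,\beta}}{\beta}$ of supports of idempotent generators of $\tbeta T\ominus\talpha T$, and then uses the identity $y\ominus x=y\wedge(z\ominus x)$ of Lemma~\ref{L:BanMeasMod} together with Lemma~\ref{L:Pmeet} and the existence of a smallest support (Corollary~\ref{C:SmSupp}) to prove that $\supp(\tbeta T\ominus\talpha T)\subseteq Z_\alpha\cup\set{\beta}$, where $Z_\alpha$ is the root attached to~$\alpha$. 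Substituting~$1$ for~$\tbeta$ (via the isotone maps collapsing the ordinals above~$\beta$ to~$\ob$) then yields idempotents $e_\alpha$ with $T=\talpha T\oplus e_\alpha T$ and $e_\beta T\subseteq e_\alpha T$ for $\alpha\leq\beta$, i.e.\ a partial \Ban\ function on $\setm{\talpha T}{\alpha<\omega_1}$, which contradicts Proposition~\ref{P:NoBan}(ii) directly; the uniformization over~$\alpha$ that your approach would have to do by hand is thereby delegated to the already-proved proposition. To repair your proof you would need to supply an analogue of this support-localization step (or some other device controlling the two-indexed family), since without it the passage from diagonal data to the relation $A\utr B$ has no basis.
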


\begin{proof}
The structure $T:=\cR_\xV(\omega_1)$ is a regular $\FF$-algebra with quasi-inversion. Let~$\st$ be a term of~$\Sigma_\FF$ with arity~$n$, let~$\Lambda$ be a chain, and let~$X=\set{\xi_1,\dots,\xi_n}$ with all~$\xi_i\in\Lambda$ and $\xi_1<\cdots<\xi_n$. We shall write 
 \[
 \st[X]:=\st(\txi_1,\dots,\txi_n)\quad
 \text{evaluated in }\cR_\xV(\Lambda)\,.
 \]
Similarly, if $n=k+l$, $X=\set{\xi_1,\dots,\xi_k}$ with $\xi_1<\cdots<\xi_k$, and $Y=\set{\eta_1,\dots,\eta_l}$ with $\eta_1<\cdots<\eta_l$, we shall write
 \[
 \st[X;Y]:=\st(\txi_1,\dots,\txi_k,\teta_1,\dots,\teta_l)\quad
 \text{evaluated in }\cR_\xV(\Lambda)\,.
 \]
If $Y=\set{\eta_1,\dots,\eta_n}$ with $\eta_1<\cdots<\eta_n$ and $a\in\cR_\xV(\Lambda)$, we shall write
 \[
 \st[a;Y]:=\st(a,\teta_1,\dots,\teta_n)\quad
 \text{evaluated in }\cR_\xV(\Lambda)\,.
 \]
Now let $\ominus$ be an $\cL_\xV(\omega_1)$-valued \Ban\ measure on~$\cX$.

For all $\alpha\leq\beta<\omega_1$, there are a finite subset~$S_{\alpha,\beta}$ of~$\omega_1$ and a term~$\st_{\alpha,\beta}$ of~$\Sigma_\FF$ such that
 \begin{equation}\label{Eq:betalphdiff}
 \tbeta\cdot T\ominus\talpha\cdot T=
 \st_{\alpha,\beta}[S_{\alpha,\beta}]\cdot T\,. 
 \end{equation}
As $x\cdot T=(xx')\cdot T$ for each $x\in T$, we may assume that the term~$\st_{\alpha,\beta}$ is \emph{strongly idempotent}.
By Lemma~\ref{L:Deltaplus}, for each $\alpha<\omega_1$, there are an uncountable subset~$W_\alpha$ and a finite subset~$Z_\alpha$ of~$\omega_1$ such that, setting $S'_{\alpha,\beta}:=S_{\alpha,\beta}\setminus Z_\alpha$,
 \begin{equation}\label{Eq:CondDefWalph}
 S_{\alpha,\beta}\cap S_{\alpha,\gamma}=Z_\alpha\text{ and }
 Z_\alpha<S'_{\alpha,\beta}<S'_{\alpha,\gamma}\,,\quad
 \text{for all }\beta<\gamma\text{ in }W_\alpha\,. 
 \end{equation}
As the similarity type~$\Sigma_\FF$ is countable, we may refine further the uncountable subset~$W_\alpha$ in such a way that $\st_{\alpha,\beta}=\st_\alpha=\text{constant}$, for all $\beta\in W_\alpha$.

Now let $\alpha\leq\beta<\omega_1$. Pick $\gamma,\delta\in W_\alpha$ such that $\beta<\gamma<\delta$. We compute
 \begin{align*}
 \tbeta\cdot T\ominus\talpha\cdot T&=
 \tbeta\cdot T\cap(\tgamma\cdot T\ominus\talpha\cdot T)
 &&(\text{by the second part of Lemma~\ref{L:BanMeasMod}})\\
 &=\tbeta\cdot T\cap\st_\alpha[S_{\alpha,\gamma}]\cdot T\,,
 \end{align*}
so, by using Lemma~\ref{L:Pmeet},
 \begin{equation}\label{Eq:bsmalphwithsm}
 \tbeta\cdot T\ominus\talpha\cdot T
 =\sm(\tbeta,\st_\alpha[S_{\alpha,\gamma}])\cdot T\,.
 \end{equation}
In particular, the support of $\tbeta\cdot T\ominus\talpha\cdot T$ (cf. Corollary~\ref{C:SmSupp}) is contained in $S_{\alpha,\gamma}\cup\set{\beta}$. Similarly, this support is contained in $S_{\alpha,\delta}\cup\set{\beta}$, and so, by~\eqref{Eq:CondDefWalph},
 \begin{equation}\label{Eq:supp(betalphminus)}
 \supp(\tbeta\cdot T\ominus\talpha\cdot T)\subseteq
 Z_\alpha\cup\set{\beta}\,. 
 \end{equation}
Now set $k_\alpha:=\card Z_\alpha$, for each $\alpha<\omega_1$, and define a new term~$\su_\alpha$ by
 \begin{equation}\label{Eq:Defnsu1}
 \su_\alpha(\sx,\sy_1,\dots,\sy_{k_\alpha}):=
 \sm\bigl(\sx,\st_\alpha(\sy_1,\dots,\sy_{k_\alpha},1,\dots,1)\bigr)\,,
 \end{equation}
where the number of occurrences of the constant~$1$ in the right hand side of~\eqref{Eq:Defnsu1} is equal to $\operatorname{arity}(\st_\alpha)-k_\alpha$. As~$\sm$ is strongly idempotent, so is~$\su_\alpha$.

\setcounter{claim}{0}
\begin{claim}\label{Cl:smallbetsmalph}
The equality
$\tbeta\cdot T\ominus\talpha\cdot T=
\su_\alpha[\tbeta;Z_\alpha]\cdot T$ holds for all $\alpha\leq\beta<\omega_1$ such that $Z_\alpha\subseteq\beta+1$.
\end{claim}

\begin{cproof}
Pick $\gamma\in W_\alpha$ such that $\beta<S'_{\alpha,\gamma}$ (by~\eqref{Eq:CondDefWalph}, this is possible) and define the isotone map $f\colon\omega_1\to\omega_1\sqcup\set{\ob}$ by the rule
 \[
 f(\xi):=\begin{cases}
 \xi&(\text{if }\xi\leq\beta)\,\\
 \ob&(\text{if }\xi>\beta)\,
 \end{cases}\ ,\quad\text{for each }\xi<\omega_1\,.
 \] 
Every element of $Z_\alpha\cup\set{\beta}$ lies below~$\beta$, thus it is fixed by~$f$, while~$f$ sends each element of $S'_{\alpha,\gamma}$ to~$\ob$.
Hence, by applying the morphism~$\cL_\xV(f)$ to each side of~\eqref{Eq:bsmalphwithsm} and by using the definition~\eqref{Eq:Defnsu1}, we obtain
 \[
 \cL_\xV(f)(\tbeta\cdot T\ominus\talpha\cdot T)=
 \su_\alpha[\tbeta;Z_\alpha]\cdot T\,.
 \]
On the other hand, as every element of $Z_\alpha\cup\set{\beta}$ is fixed by~$f$, it follows from~\eqref{Eq:supp(betalphminus)} that $\tbeta\cdot T\ominus\talpha\cdot T$ is fixed under~$\cL_\xV(f)$. The conclusion follows. 
\end{cproof}

As~$\su_\alpha$ is a strongly idempotent term, the element $e_\alpha:=\su_\alpha[1;Z_\alpha]$ is idempotent in~$T$.

\begin{claim}\label{Cl:ealphaCompl}
The relation $T=\talpha\cdot T\oplus e_\alpha\cdot T$ holds for each $\alpha<\omega_1$.
\end{claim}

\begin{cproof}
Let $\beta<\omega_1$ with $\alpha<\beta$ and $Z_\alpha<\beta$, and define an isotone map $g\colon\omega_1\to\omega_1\sqcup\set{\ob}$ by the rule
 \[
 g(\xi):=\begin{cases}
 \xi&(\text{if }\xi<\beta)\\
 \ob&(\text{if }\xi\geq\beta)
 \end{cases}\ ,\quad
 \text{for each }\xi<\omega_1\,.
 \]
{}From Claim~\ref{Cl:smallbetsmalph} it follows that
$\tbeta\cdot T=\talpha\cdot T\oplus\su_\alpha[\tbeta;Z_\alpha]\cdot T$, thus, applying the $0$-lattice homomorphism~$\cL_\xV(g)$, we obtain
 \begin{equation}
 T=\talpha\cdot T\oplus\su_\alpha[1;Z_\alpha]\cdot T
 =\talpha\cdot T\oplus e_\alpha\cdot T\,.\tag*{\qedc}
 \end{equation}
\renewcommand{\qedc}{}
\end{cproof}

\begin{claim}\label{Cl:Monealpha}
The containment $e_\beta\cdot T\subseteq e_\alpha\cdot T$ holds, for all $\alpha\leq\beta<\omega_1$.
\end{claim}

\begin{cproof}
Pick $\gamma<\omega_1$ such that $\beta<\gamma$ and $Z_\alpha\cup Z_\beta<\gamma$. We compute
 \begin{align*}
 \su_\beta[\tgamma;Z_\beta]\cdot T&=\tgamma\cdot T\ominus\tbeta\cdot T
 &&(\text{by Claim~\ref{Cl:smallbetsmalph}})\\
 &\subseteq\tgamma\cdot T\ominus\talpha\cdot T
 &&(\text{by the monotonicity assumption on }\ominus)\\
 &=\su_\alpha[\tgamma;Z_\alpha]\cdot T
 &&(\text{by Claim~\ref{Cl:smallbetsmalph}})\,,
 \end{align*}
thus, as $\su_\alpha[\tgamma;Z_\alpha]$ is idempotent,
 \begin{equation}\label{Eq:subetaalpha}
 \su_\beta[\tgamma;Z_\beta]=
 \su_\alpha[\tgamma;Z_\alpha]\cdot\su_\beta[\tgamma;Z_\beta]\,.
 \end{equation}
Now define an isotone map $h\colon\omega_1\to\omega_1\sqcup\set{\ob}$ by the rule
 \[
 h(\xi):=\begin{cases}
 \xi&(\text{if }\xi<\gamma)\\
 \ob&(\text{if }\xi\geq\gamma)
 \end{cases}\ ,\quad
 \text{for each }\xi<\omega_1\,.
 \]
By applying $\cR_\xV(h)$ to the equation~\eqref{Eq:subetaalpha}, we obtain that $e_\beta=e_\alpha\cdot e_\beta$. The conclusion follows.
\end{cproof}

By Claims~\ref{Cl:ealphaCompl} and~\ref{Cl:Monealpha}, the family $\famm{e_\alpha\cdot T}{\alpha<\omega_1}$ defines a partial \Ban\ function on $\setm{\talpha\cdot T}{\alpha<\omega_1}$ in $\cL_\xV(\omega_1)=\LL(\cR_\xV(\omega_1))$. This contradicts the result of Proposition~\ref{P:NoBan}(ii).
\end{proof}

\section{A non-coordinatizable lattice with a large $4$-frame}\label{S:NonCoord}

A weaker variant of J\'onsson's Problem, of finding a non-coordinatizable \scml\ with a large $4$-frame, asks for a \emph{diagram counterexample} instead of an \emph{object counterexample}. In order to solve the full problem, we shall first settle the weaker version, by finding an $\omega_1$-indexed diagram of $4/5$-entire countable \scml s that cannot be lifted with respect to the~$\LL$ functor (cf. Lemma~\ref{L:NonLLLiftDiagr}).

The full solution of J\'onsson's Problem will then be achieved by invoking a tool from \emph{category theory}, introduced in Gillibert and Wehrung~\cite{GiWe}, designed to turn diagram counterexamples to object counterexamples. This tool is called there the ``Condensate Lifting Lemma'' (CLL). The general context of CLL is the following. We are given \emph{categories}~$\cA$, $\cB$, $\cS$ together with \emph{functors} $\Phi\colon\cA\to\cS$ and $\Psi\colon\cB\to\cS$, such that for ``many'' objects~$A\in\cA$, there exists an object~$B\in\cB$ such that $\Phi(A)\cong\Psi(B)$. We are trying to find an assignment $\Gamma\colon\cA\to\cB$, ``as functorial as possible'', such that $\Phi\cong\Psi\Gamma$ on a ``large'' subcategory of~$\cA$. Roughly speaking, CLL states that if the initial categorical data can be augmented by subcategories $\cA^\dagger\subseteq\cA$ and $\cB^\dagger\subseteq\cB$ (the ``small objects'') together with $\cS^\Rightarrow\subseteq\cS$ (the ``double arrows'') such that $(\cA,\cB,\cS,\Phi,\Psi,\cA^\dagger,\cB^\dagger,\cS^\Rightarrow)$ forms a \emph{projectable larder}, then this can be done. Checking larderhood, although somehow tedious, is a relatively easy matter, the least trivial point, already checked in~\cite{GiWe}, being the verification of the L\"owenheim-Skolem Property~$\mathrm{LS}^{\mathrm{r}}_{\aleph_1}(B)$ (cf. the proof of Lemma~\ref{L:RightLard}).

Besides an infinite combinatorial lemma by Gillibert, namely~\cite[Proposition~4.6]{Gill07}, we shall need only a small part of~\cite{GiWe}; basically, referring to the numbering used in version~1 of~\cite{GiWe} (which is the current version as to the present writing),
\begin{itemize}
\item[---] The definition of a projectability witness (Definition~1-5.1 in~\cite{GiWe}).

\item[---] The definition of a projectable larder (Definition~3-4.1 in~\cite{GiWe}). Strong larders will not be used.

\item[---] The statement of CLL (Lemma~3-4.2 in~\cite{GiWe}), for $\lambda=\mu=\aleph_1$. This statement involves the category $\mathbf{Bool}_P$ (Definition~2-2.3 in~\cite{GiWe}), here for $P:=\omega_1$, and the definition of $\boldsymbol{B}\otimes\vec A$ for $\boldsymbol{B}\in\mathbf{Bool}_P$ and a $P$-indexed diagram~$\vec A$. These constructions are rather easy and only a few of their properties, recorded in Chapter~2 of~\cite{GiWe}, will be used. A full understanding of \emph{lifters}, or of the $P$-scaled Boolean algebra~$\mathbf{F}(X)$ involved in the statement of CLL, is not needed.

\item[---] Parts of Chapter~6 in~\cite{GiWe}, that are, essentially, easy categorical statements about regular rings.

\end{itemize}

We shall consider the similarity type $\Gamma:=(0,\vee,\wedge,\sa_0,\sa_1,\sa_2,\sa_3,\sc_1,\sc_2,\sc_3,\sI)$, where $0$, $1$, the $\sa_i$s, and the~$\sc_i$s are symbols of constant, both~$\vee$ and~$\wedge$ are symbols of binary operations, and~$\sI$ is a (unary) predicate symbol. Furthermore, we consider the axiom system~$\cT$ in~$\Gamma$ that states the following:
\begin{itemize}
\item[(LAT)] $(0,\vee,\wedge)$ defines a \scml\ structure;

\item[(HOM)] $(\sa_0,\sa_1,\sa_2,\sa_3)$ is independent and~$\sa_0\sim_{\sc_i}\sa_i$ for each $i\in\set{1,2,3}$;

\item[(ID)] $\sI$ is an ideal;

\item[(REM)] every element of~$\sI$ is subperspective to~$\sa_0$ and disjoint from $\bigoplus_{i=0}^3\sa_i$;

\item[(BASE)] every element lies below $x\oplus\bigoplus_{i=0}^3\sa_i$ for some~$x\in\sI$.
\end{itemize}

In particular, (the underlying lattice of) every model for~$\cT$ is $4/5$-entire (cf. Definition~\ref{D:n/mentire}), so it has a large~$4$-frame.

Observe that every axiom of~$\cT$ has the form $(\forall\vec\sx)\bigl(\varphi(\vec\sx)\Rightarrow(\exists\vec\sy)\psi(\vec\sx,\vec\sy)\bigr)$ for finite conjunctions of atomic formulas~$\varphi$ and~$\psi$. For example, the axiom (REM) can be written
 \[
 (\forall\sx)\Bigl(\sI(\sx)\Rightarrow
 \bigl(\sx\wedge(\sa_0\vee\sa_1\vee\sa_2\vee\sa_3)=0
 \text{ and }(\exists\sy)(\sx\wedge\sy=\sa_0\wedge\sy=0
 \text{ and }\sx\leq\sa_0\vee\sy)\bigr)\Bigr)\,.
 \]
It follows that the category~$\cA$ of all models of~$\cT$, with their homomorphisms, is closed under arbitrary products and direct limits (i.e., directed colimits) of models.

Denote by~$\cS$ the category of all sectionally complemented modular lattices with $0$-lattice homomorphisms, and denote by~$\Phi$ the forgetful functor from~$\cA$ to~$\cS$.

Denote by~$\cB$ the category of all von Neumann regular rings with ring homomorphisms, and take~$\Psi:=\LL$, which is indeed a functor from~$\cB$ to~$\cS$.

Denote by~$\cA^\dagger$ (respectively, $\cB^\dagger$) the full subcategory of~$\cA$ (respectively, $\cB$) consisting of all \emph{countable} structures.

Denote by~$\cS^\Rightarrow$ the category of all sectionally complemented modular lattices with \emph{surjective} $0$-lattice homomorphisms. The morphisms in~$\cS^\Rightarrow$ will be called the \emph{double arrows} of~$\cS$.

Our first categorical statement about the data just introduced involves the \emph{left larders} developed in~\cite[Section~3.8]{GiWe}.

\begin{lemma}\label{L:LeftLard}
The quadruple $(\cA,\cS,\cS^\Rightarrow,\Phi)$ is a left larder.
\end{lemma}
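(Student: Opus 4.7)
The plan is to unpack the definition of left larder from~\cite{GiWe} and verify its clauses in turn: that $\cS^\Rightarrow$ is a subcategory of~$\cS$ containing all identities, that $\cA$ has all small directed colimits, and that $\Phi\colon\cA\to\cS$ preserves those colimits.

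The first clause is immediate. The identity on any $L\in\cS$ is surjective, and a composite of two surjective $0$-lattice homomorphisms between \scml s is surjective; so $\cS^\Rightarrow$ is a subcategory of~$\cS$ containing all isomorphisms.

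For the existence of directed colimits in~$\cA$, the key observation is that every axiom of~$\cT$ has the form
\[
 (\forall\vec{\sx})\bigl(\varphi(\vec{\sx})\Rightarrow(\exists\vec{\sy})\psi(\vec{\sx},\vec{\sy})\bigr),
\]
where $\varphi$ and $\psi$ are finite conjunctions of atomic formulas in~$\Gamma$ (this was already observed just after the statement of~$\cT$). Sentences of this ``coherent'' shape are preserved under directed colimits computed in the ambient category of $\Gamma$-structures: any element of the colimit arises from some stage of the diagram; atomic formulas are preserved along the transition maps and hence true in the colimit; and an existential witness produced at any one stage persists in the colimit. I would apply this to each of the existential clauses in turn---sectional complementation in (LAT), subperspectivity to~$\sa_0$ in (REM), and the domination by some $x\oplus\bigoplus_{i=0}^{3}\sa_i$ with $x\in\sI$ in (BASE)---the remaining axioms being either universal or reducible to universal ones (for instance, ideal closure in (ID) amounts to $\sI(0)$, closure of $\sI$ under binary joins, and downward closure). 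Hence the directed colimit of a directed diagram in~$\cA$ taken in the category of $\Gamma$-structures satisfies~$\cT$, and it is then routine to check that it is also the colimit within~$\cA$.

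Finally, for the preservation of directed colimits by~$\Phi$, directed colimits in both~$\cA$ and~$\cS$ are computed set-theoretically on underlying sets, and forgetting the distinguished constants $\sa_0,\dots,\sa_3,\sc_1,\sc_2,\sc_3$ and the predicate~$\sI$ commutes with this construction; so the underlying lattice of a directed colimit in~$\cA$ is the directed colimit in~$\cS$ of the underlying lattices. No substantial obstacle arises; the only mildly delicate point is the preservation of the $\forall\exists$-axioms under directed colimits, which is a classical model-theoretic fact about coherent sentences.
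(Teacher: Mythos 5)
Your verification of \textup{CLOS}$(\cA)$ (closure of~$\cA$ under small directed colimits, via the $\forall\exists$ shape of the axioms of~$\cT$) and of \textup{CONT}$(\Phi)$ (preservation of these colimits by the forgetful functor) is fine and is exactly in the spirit of the paper, which records this observation right after displaying the form of the axioms. However, you have unpacked the definition of a left larder incompletely, so the proof as written has a genuine gap. The definition used here comprises four conditions: \textup{CLOS}$(\cA)$, \textup{PROD}$(\cA)$ (existence of all finite nonempty products in~$\cA$), \textup{CONT}$(\Phi)$, and \textup{PROJ}$(\Phi,\cS^\Rightarrow)$ (every extended projection of~$\cA$, that is, a direct limit of projections $X_i\times Y_i\onto X_i$, is sent by~$\Phi$ to a double arrow of~$\cS$). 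You address neither \textup{PROD}$(\cA)$ nor \textup{PROJ}$(\Phi,\cS^\Rightarrow)$; instead your ``first clause'' checks that~$\cS^\Rightarrow$ is a subcategory of~$\cS$ containing the identities, which is not the condition in which~$\cS^\Rightarrow$ actually intervenes. The condition \textup{PROJ} is the only clause tying~$\Phi$ to the choice of double arrows, and it is precisely the point the paper singles out: an extended projection is a directed colimit of surjective homomorphisms, hence surjective, hence its image under~$\Phi$ is a surjective $0$-lattice homomorphism, i.e., a double arrow.

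Both missing clauses are easy to supply, and you should do so explicitly. For \textup{PROD}$(\cA)$, note that sentences of the form $(\forall\vec\sx)\bigl(\varphi(\vec\sx)\Rightarrow(\exists\vec\sy)\psi(\vec\sx,\vec\sy)\bigr)$, with $\varphi$ and~$\psi$ finite conjunctions of atomic formulas, are preserved under arbitrary (in particular finite nonempty) direct products: atomic formulas hold in a product if and only if they hold componentwise, and witnesses can be chosen componentwise and assembled into a witnessing tuple. Hence~$\cA$ is closed under finite nonempty products of $\Gamma$-structures, and these serve as products in~$\cA$. With these two additions your argument becomes a complete proof along the same lines as the paper's.
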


\begin{proof}
We recall that left larders are defined by the following properties:
\begin{itemize}
\item[(CLOS$(\cA)$)] $\cA$ has all small directed colimits;

\item[(PROD$(\cA)$)] $\cA$ has all finite nonempty products;

\item[(CONT($\Phi$)] $\Phi$ preserves all small directed colimits;

\item[(PROJ$(\Phi,\cS^\Rightarrow)$)] $\Phi$ sends any extended projection of~$\cA$ (i.e., a direct limit $p=\varinjlim_{i\in I}p_i$ for projections $p_i\colon X_i\times Y_i\onto X_i$ in~$\cA$) to a double arrow in~$\cS$.
\end{itemize}

All the corresponding verifications are straightforward (e.g., every extended projection~$f$ is surjective, thus $\Phi(f)$ is a double arrow).
\end{proof}

Our second categorical statement states something about the more involved notion, defined in~\cite[Section~3.8]{GiWe}, of a \emph{right $\lambda$-larder}. We shall also use the notions, introduced in that paper, of \emph{projectability} of right larders. The following result is a particular case, for $\lambda=\aleph_1$, of Theorem~6-2.2 in (version~1 of)~\cite{GiWe}.

\begin{lemma}\label{L:RightLard}
Denote by $\cS^\dagger$ the class of all countable \scml s. Then the $6$-uple $(\cB,\cB^\dagger,\cS,\cS^\dagger,\cS^\Rightarrow,\LL)$ is a projectable right $\aleph_1$-larder.
\end{lemma}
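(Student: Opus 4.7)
The plan is to verify, one by one, the component axioms in the definition of a projectable right $\aleph_1$-larder for the six-tuple $(\cB,\cB^\dagger,\cS,\cS^\dagger,\cS^\Rightarrow,\LL)$. Since this is exactly Theorem~6-2.2 of~\cite{GiWe} specialized to $\lambda=\aleph_1$, my proposal is to identify which axioms come for free and which one carries the real content.

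The easy verifications go as follows. By definition, $\cB^\dagger$ and $\cS^\dagger$ consist of the countable objects in $\cB$ and $\cS$ respectively, both are closed under countable directed colimits, and the inclusion functors are fully faithful. The functor~$\LL$ sends $\cB^\dagger$ into~$\cS^\dagger$, since a countable ring has only countably many idempotents and $\LL(R)=\setm{eR}{e\in\Idemp R}$. Preservation of directed colimits by~$\LL$ is already recorded in Section~\ref{Su:vNRR}. The remaining routine point is that $\LL$ sends any surjective ring homomorphism $f\colon R\onto R'$ between regular rings to a double arrow of~$\cS$: given $e'\in\Idemp R'$, any preimage $x\in R$ of~$e'$ admits a quasi-inverse~$y$ in~$R$, and the idempotent $e:=xy$ satisfies $f(eR)=e'R'$, along the lines of the argument in the proof of Lemma~\ref{L:L(R)unital}.

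The nontrivial point, explicitly flagged in the discussion preceding the lemma, is the L\"owenheim-Skolem Property $\mathrm{LS}^{\mathrm{r}}_{\aleph_1}(B)$ for each $B\in\cB$. Its combinatorial core is the statement that every countable subset $X$ of a regular ring~$B$ is contained in a countable regular subring $B_0\subseteq B$; this is supplied by the standard $\omega$-stage closure alternating between closure under the ring operations and the adjunction, for each existing element, of one chosen quasi-inverse taken from~$B$. The functorial refinement demanded by larderhood---namely, that for any countable collection of arrows from objects of~$\cS^\dagger$ into~$\LL(B)$ one can arrange $B_0$ so that all of them factor through $\LL(B_0)\into\LL(B)$---is handled by the same closure, after augmenting~$X$ with idempotent representatives of the relevant principal right ideals and generators of the parameter objects before running the construction.

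Projectability, in the sense of Definition~1-5.1 of~\cite{GiWe}, asks for a projectability witness on~$\cB$; in the present regular-ring setting this is furnished by the direct-product decompositions induced by central idempotents of the rings in~$\cB$, which correspond under~$\LL$ to direct decompositions of the lattices in~$\cS$. The main obstacle is thus the L\"owenheim-Skolem step; once it is in place, the remaining categorical bookkeeping is tedious but formal, and the complete verification is already carried out in~\cite{GiWe}.
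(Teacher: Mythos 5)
Your overall strategy coincides with the paper's: the lemma is quoted as a special case of Theorem~6-2.2 of~\cite{GiWe}, right $\aleph_1$-larderhood is unfolded into the presentability condition (here: $\LL(B)$ is countable for $B\in\cB^\dagger$) and the L\"owenheim--Skolem condition $\mathrm{LS}^{\mathrm{r}}_{\aleph_1}(B)$, and the actual verifications are deferred to \cite[Chapter~6]{GiWe}. Your sketch of the L\"owenheim--Skolem step is in the right spirit (close a countable subset under the ring operations and chosen quasi-inverses, after throwing in idempotent generators of the relevant principal right ideals), although you have misstated the condition itself: $\mathrm{LS}^{\mathrm{r}}_{\aleph_1}(B)$ is not about factoring arrows \emph{from} objects of~$\cS^\dagger$ \emph{into}~$\LL(B)$, but about a given surjection $\psi\colon\LL(B)\onto S$ with~$S$ countable and countably many countable subobjects $u_n\colon U_n\mono B$: one must produce a countable regular $U\mono B$ above all the~$u_n$ with $\psi\circ\LL(u)$ still surjective. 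Your closure argument, run after adjoining one idempotent $e_s$ with $\psi(e_sB)=s$ for each $s\in S$, is exactly what this requires, so this is a matter of formulation rather than substance.

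The genuine defect is your account of projectability. A projectability witness for a double arrow $\psi\colon\LL(B)\onto S$ must exhibit~$\psi$, up to isomorphism, as $\LL(f)$ for a ring homomorphism $f$ out of~$B$; the correct mechanism is the correspondence between neutral ideals of~$\LL(B)$ and two-sided ideals of~$B$: the kernel ideal $\psi^{-1}\set{0}$ is neutral, hence of the form $\LL_B(I)$ for a (two-sided) ideal~$I$ of~$B$, the quotient $B/I$ is again regular, and $\psi$ factors as $\LL(B\onto B/I)$ followed by an isomorphism $\LL(B/I)\cong S$. Direct-product decompositions induced by central idempotents, which is what you invoke, only account for \emph{factor} congruences of~$\LL(B)$; a regular ring generally has two-sided ideals that are not generated by central idempotents (already for suitable countable commutative examples), so the corresponding surjections $\LL(B)\onto S$ would have no witness under your scheme. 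Since ``projectable'' is part of the statement, this component of your argument would fail as written; it is rescued only by your final appeal to the complete verification in~\cite{GiWe}, which is in fact carried out via quotients by ideals as above.
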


\begin{proof}
Right larderhood amounts here to the conjunction of the two following statements:
\begin{itemize}
\item PRES${}_{\aleph_1}(\cB^\dagger,\LL)$: The lattice~$\LL(B)$ is ``weakly $\aleph_1$-presented'' in~$\cS$ (which means, here, \emph{countable}), for each $B\in\cB^\dagger$.

\item $\mathrm{LS}^{\mathrm{r}}_{\aleph_1}(B)$ (for every object~$B$ of~$\cB$): For every countable \scml\ $S$, every surjective lattice homomorphism\linebreak $\psi\colon\LL(B)\onto S$, and every sequence $\famm{u_n\colon U_n\mono B}{n<\omega}$ of monomorphisms in~$\cB$ with all~$U_n$ countable, there exists a monomorphism $u\colon U\mono\nobreak B$ in~$\cB$, lying above all~$u_n$ in the subobject ordering, such that~$U$ is countable and $\psi\circ\LL(u)$ is surjective.
\end{itemize}

Both statements are verified in~\cite[Chapter~6]{GiWe}.
\end{proof}

Now bringing together Lemmas~\ref{L:LeftLard} and~\ref{L:RightLard} is a trivial matter:

\begin{corollary}\label{C:Larder}
The $8$-uple $(\cA,\cB,\cS,\cA^\dagger,\cB^\dagger,\cS^\Rightarrow,\Phi,\LL)$ is a projectable $\aleph_1$-larder.
\end{corollary}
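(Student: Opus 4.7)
The plan is to verify, piece by piece, each constituent of the definition of projectable $\aleph_1$-larder (Definition~3-4.1 of \cite{GiWe}). As I read that definition, an $8$-uple $(\cA,\cB,\cS,\cA^\dagger,\cB^\dagger,\cS^\Rightarrow,\Phi,\Psi)$ is a projectable $\aleph_1$-larder if and only if three bundles of conditions hold: (a) the left-larder axioms for $(\cA,\cS,\cS^\Rightarrow,\Phi)$; (b) the right $\aleph_1$-larder axioms for $(\cB,\cB^\dagger,\cS,\cS^\dagger,\cS^\Rightarrow,\Psi)$ for some auxiliary class $\cS^\dagger$ of ``small'' objects of~$\cS$, together with a projectability witness; and (c) a compatibility condition stating that every $A\in\cA^\dagger$ is weakly $\aleph_1$-presented in~$\cA$.

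For~(a), I would simply appeal to Lemma~\ref{L:LeftLard}. For~(b), I would invoke Lemma~\ref{L:RightLard}, which, taking $\cS^\dagger$ to be the class of countable \scml s, simultaneously yields right-larderhood at level~$\aleph_1$ and the projectability witness (the latter being built from the functorial character of~$\LL$ on regular rings, as developed in Chapter~6 of \cite{GiWe}).

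The only ingredient not immediately read off from those two lemmas is~(c): each countable model of~$\cT$ should be weakly $\aleph_1$-presented in~$\cA$. This I expect to be the main, but still quite mild, obstacle. The argument I have in mind is purely formal: the signature $\Gamma$ is countable, the axiomatisation $\cT$ consists of finitely many $\forall\exists$-sentences each of which is a finite conjunction of atomic formulas in its matrix, so~$\cA$ is closed under arbitrary products and directed colimits (as already observed in the excerpt). Standard universal-algebra folklore then ensures that for a countable~$A\in\cA$ the covariant Hom functor $\mathrm{Hom}_\cA(A,-)$ commutes with $\aleph_1$-directed colimits, which is exactly the content of weak $\aleph_1$-presentation in~$\cA$.

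Assembling~(a), (b), and~(c) gives the stated projectable $\aleph_1$-larder. I do not anticipate any deeper issue than the bookkeeping check~(c); the substance of the matter is already packaged in Lemmas~\ref{L:LeftLard} and~\ref{L:RightLard}, and the corollary really is the ``trivial assembly'' advertised in the preceding paragraph of the text.
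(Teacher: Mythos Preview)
Your plan is essentially the paper's: it declares that ``bringing together Lemmas~\ref{L:LeftLard} and~\ref{L:RightLard} is a trivial matter'' and offers no further argument, so items~(a) and~(b) of your outline already constitute the whole proof as the author intends it.

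The paper does not isolate or verify any separate condition~(c). To the extent a compatibility condition between the left and right data is required by Definition~3-4.1 of~\cite{GiWe}, it is that~$\Phi$ carry~$\cA^\dagger$ into~$\cS^\dagger$---which is immediate here, since~$\Phi$ is the forgetful functor and both dagger-classes consist of the countable structures---rather than the weak $\aleph_1$-presentability \emph{within}~$\cA$ that you sketch. Your argument for~(c) is harmless but appears to address the wrong statement; the actual assembly really is as trivial as advertised.
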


The following crucial result makes an essential use of our work on \Ban\ functions in Section~\ref{S:CoordAl1}.

\begin{lemma}\label{L:NonLLLiftDiagr}
There are increasing $\omega_1$-chains $\vec{A}=\famm{A_\xi}{\xi<\omega_1}$ and\linebreak $\vec{A}'=\famm{A'_\xi}{\xi<\omega_1}$ of countable models in~$\cA$, all with a unit, such that the following statements hold:
\begin{enumerate}
\item $\Phi\vec{A}$ cannot be lifted, with respect to the~$\LL$ functor, by any diagram in~$\cB$.

\item $A_\xi$ is a principal ideal of~$A'_\xi$, for each $\xi<\omega_1$.

\item All the models $A'_\xi$ share the same spanning $5$-frame.
\end{enumerate}
\end{lemma}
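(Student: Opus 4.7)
The construction starts from the ring $T := \cR_{\xV_\FF}(\omega_1)$ of Section~\ref{S:CoordAl1}, with $\FF$ a countable field. For each $\xi<\omega_1$, let $T_\xi$ denote the image in $T$ of $\cR_{\xV_\FF}(\xi+1)$ under the canonical embedding of Proposition~\ref{P:cRonetoone}; this is a countable regular $\FF$-subalgebra of $T$, closed under quasi-inversion, containing $\txi$. Set $R_\xi:=T_\xi^{5\times 5}$ (countable and regular by Proposition~\ref{P:MatReg}) and $A'_\xi:=\LL(R_\xi)$. The diagonal matrix units $e_0,\dots,e_4$, together with standard matrix-unit perspectivity witnesses, give a spanning $5$-frame in $A'_\xi$ that does not change with $\xi$, so (iii) holds. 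Let $\txi'\in T_\xi$ be the quasi-inverse of $\txi$, so that $\txi\txi'$ is an idempotent of $T_\xi$ generating $\txi T_\xi$, and put
\[
 p_\xi:=(e_0+e_1+e_2+e_3)+(\txi\txi')_{44}\,,\qquad A_\xi:=A'_\xi\dnw(p_\xi R_\xi)\,,
\]
which is a principal ideal of $A'_\xi$, giving (ii). Setting $a_i:=e_iR_\xi$ for $i<4$ and $\sI_\xi:=A_\xi\dnw((\txi\txi')_{44}R_\xi)$, one verifies directly that $A_\xi$ is $4/5$-entire with respect to $(a_0,a_1,a_2,a_3)$ and $\sI_\xi$, hence carries a large $4$-frame. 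Functoriality of $\vec A$ reduces to $p_\eta p_\xi=p_\xi$ for $\xi\le\eta$; the only non-obvious piece, $(\teta\teta')_{44}(\txi\txi')_{44}=(\txi\txi')_{44}$, follows from $\teta\teta'\txi=\teta\teta'\teta\txi=\teta\txi=\txi$.

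For (i), suppose a lifting $\vec B=\famm{B_\xi,\varphi_{\xi\eta}}{\xi\le\eta<\omega_1}$ in $\cB$ with $\LL\vec B\cong\Phi\vec A$ exists, and let $B:=\varinjlim B_\xi$; by continuity of $\LL$, $\LL(B)\cong L:=\varinjlim A_\xi$. Put $P_\xi:=\varphi_{\xi\infty}(1_{B_\xi})\in B$. Since ring homomorphisms preserve $\utr$, $P_\xi\utr P_\eta$ whenever $\xi\le\eta$, and $P_\xi B$ corresponds under $\LL(B)\cong L$ to the image of the top of $A_\xi$. Applying Lemma~\ref{L:DecompId} inside $B_0$ to the decomposition $1_{B_0}B_0\cong a\oplus r_0$ (where $a:=a_0\oplus a_1\oplus a_2\oplus a_3$ and $r_0$ is the image in $L$ of $(\widetilde{0}\widetilde{0}')_{44}R_0$), we obtain an idempotent $\alpha_0\in B_0$ with $\alpha_0 B_0$ corresponding to $a$; let $\alpha:=\varphi_{0\infty}(\alpha_0)\in B$, so that $\alpha B=a$ in $L$. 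Then $\alpha\in P_0BP_0$, and the inclusion $P_0BP_0\subseteq P_\xi BP_\xi$ (an immediate consequence of $P_0\utr P_\xi$) gives $\alpha\utr P_\xi$ for every $\xi$. Define $\epsilon_\xi:=P_\xi-\alpha$: the identities $P_\xi\alpha=\alpha P_\xi=\alpha^2=\alpha$ show that $\epsilon_\xi$ is an idempotent orthogonal to $\alpha$ with $\alpha\oplus\epsilon_\xi=P_\xi$, and Lemma~\ref{L:DecompId} applied to $P_\xi B=a\oplus r_\xi$ identifies $\epsilon_\xi B$ with $r_\xi$, the image in $L$ of $(\txi\txi')_{44}R_\xi$. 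For $\xi\le\eta$, using $P_\xi P_\eta=P_\eta P_\xi=P_\xi$, a direct expansion gives $\epsilon_\xi\epsilon_\eta=\epsilon_\eta\epsilon_\xi=P_\xi-\alpha=\epsilon_\xi$, so $\epsilon_\xi\utr\epsilon_\eta$.

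Define $r_\eta\ominus r_\xi:=(\epsilon_\eta-\epsilon_\xi)B$ for $\xi\le\eta$. The $\utr$-chain $\famm{\epsilon_\xi}{\xi<\omega_1}$ yields, for any $\xi\le\eta\le\zeta$, the orthogonal decomposition $\epsilon_\zeta-\epsilon_\xi=(\epsilon_\eta-\epsilon_\xi)\oplus(\epsilon_\zeta-\epsilon_\eta)$, which furnishes both the sectional-complement property and the monotonicity conditions of Definition~\ref{D:BanMeas}; hence $\ominus$ is a Banaschewski measure on $\famm{r_\xi}{\xi<\omega_1}$ in $L$. Since $L$ embeds as a $\{0,\vee,\wedge\}$-sublattice of $L^*:=\LL(T^{5\times 5})$, it remains a Banaschewski measure in $L^*$. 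Extend it to $\famm{p_\xi R}{\xi<\omega_1}\subseteq L^*$ by $p_\eta R\ompr p_\xi R:=r_\eta\ominus r_\xi$; the modular law in $L^*$ yields $p_\xi R\oplus(r_\eta\ominus r_\xi)=p_\eta R$ and transfers the monotonicity. Apply Lemma~\ref{L:Transfer} to $L^*$ with $e:=(e_0+e_1+e_2+e_3)R$, $b:=e_4R$, and $X:=\famm{(\txi\txi')_{44}R}{\xi<\omega_1}\subseteq L^*\dnw b$ (so $e\oplus X=\famm{p_\xi R}{\xi<\omega_1}$): this produces a Banaschewski measure on $X$ with values in $L^*\dnw b=\LL(e_4Re_4)\cong\LL(T)=\cL_{\xV_\FF}(\omega_1)$, and the ring isomorphism $e_4Re_4\cong T$ identifies $X$ with $\cX_\FF$, contradicting Theorem~\ref{T:NoBanMeas}. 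The principal difficulty is in the second step: producing a single idempotent $\alpha\in B$ representing $a$ that is a $\utr$-predecessor of every $P_\xi$ simultaneously; placing $\alpha$ inside $P_0BP_0$ and exploiting the implication $P_0\utr P_\xi\Rightarrow P_0BP_0\subseteq P_\xi BP_\xi$ is what unlocks the $\utr$-chain of idempotents representing $\famm{r_\xi}{\xi<\omega_1}$, from which the forbidden Banaschewski measure falls out by elementary orthogonal computation.
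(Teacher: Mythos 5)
Your construction of $\vec{A}$ and $\vec{A}'$ is essentially the paper's: the same $5\times 5$ matrix algebras over $\cR_\FF(\xi+1)$, the same fixed spanning $5$-frame of matrix units, and the same principal ideal below $(e_0+e_1+e_2+e_3)+\txi_{44}$ (your $(\txi\txi')_{44}$ generates the same right ideal since $\txi$ is idempotent), and your endgame for (i) — a Banaschewski measure on the tops, Lemma~\ref{L:Transfer} with $e=\bigl(\sum_{i<4}e_i\bigr)R$ and $b=e_4R$, the corner isomorphism $\LL(S)\dnw e_4S\cong\LL(T)$, and Theorem~\ref{T:NoBanMeas} — is exactly the paper's route (the paper works level-by-level with the units $1_\xi$ of the $B_\xi$, which are unital by Lemma~\ref{L:L(R)unital}, a fact you use tacitly when writing $1_{B_\xi}$; working at the colimit as you do is only a cosmetic difference).

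The one genuinely flawed step is the claim that ``Lemma~\ref{L:DecompId} applied to $P_\xi B=a\oplus r_\xi$ identifies $\epsilon_\xi B$ with $r_\xi$.'' The uniqueness in Lemma~\ref{L:DecompId} applies only to decompositions $P_\xi=u+v$ with $u$ and $v$ lying in the two \emph{prescribed} right ideals; your $\epsilon_\xi=P_\xi-\alpha$ is merely \emph{some} idempotent complementary to $\alpha$, and there is no reason that $\epsilon_\xi$ lies in the right ideal corresponding to $r_\xi$ (sectional complements of $a$ in $U_\xi$ are far from unique: already for $2\times2$ matrices, $\alpha=e_{11}+e_{12}$ generates $e_{11}B$ but $1-\alpha$ generates a complement different from $e_{22}B$). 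So for $\xi>0$ the image of $\epsilon_\xi B$ is some complement $s_\xi$ of $a$ in $U_\xi$, not necessarily $r_\xi$, and the asserted ``Banaschewski measure on $\famm{r_\xi}{\xi<\omega_1}$'' is not established. Fortunately the error is inessential: since $\epsilon_\eta-\epsilon_\xi=P_\eta-P_\xi$, the measure you actually use on the tops, $p_\eta R\ominus p_\xi R:=(P_\eta-P_\xi)B$ read through the isomorphism $\LL(B)\cong L\subseteq\LL(S)$, satisfies $p_\eta R=p_\xi R\oplus(p_\eta R\ominus p_\xi R)$ and the additivity/monotonicity conditions of Lemma~\ref{L:BanMeasMod} directly from the $\utr$-chain $\famm{P_\xi}{\xi<\omega_1}$, with no reference to $r_\xi$ at all; this is precisely the paper's measure $\eps_\beta\bigl((1_\beta-1_\alpha)B_\beta\bigr)$. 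Thus the detour through $\alpha$ and $\epsilon_\xi$ — which you single out as the principal difficulty — is unnecessary, and once it (or the unjustified identification) is dropped, your argument coincides with the paper's proof.
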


\begin{proof}
We fix a countable field~$\FF$ and we define regular $\FF$-algebras with quasi-inversion by $R_\xi:=\cR_\FF(\xi)$ (as defined in the comments just before Proposition~\ref{P:Index3}) and $S_\xi:=R_\xi^{5\times 5}$, for any ordinal~$\xi$. We set $R:=R_{\omega_1}$ and $S:=S_{\omega_1}$, and we identify~$R_\xi$ with its canonical image in~$R$, for each $\xi<\omega_1$ (this requires Proposition~\ref{P:cRonetoone}).
We denote by $\famm{e_{i,j}}{0\leq i,j\leq 4}$ the canonical system of matrix units of~$S$, so $\sum_{0\leq i\leq 4}e_{i,i}=1$ and $e_{i,j}e_{k,l}=\delta_{j,k}e_{i,l}$ (where~$\delta$ denotes the Kronecker symbol) in~$S$, for all $i,j,k,l\in\set{0,1,2,3,4}$.

We denote by $\psi:=(\famm{e_{i,i}S}{0\leq i\leq 4},\famm{(e_{i,i}-e_{0,i})S}{1\leq i\leq 4})$ the canonical spanning $5$-frame of~$\LL(S)$. Furthermore, we set $e:=\sum_{0\leq i\leq 3}e_{i,i}$, $b:=e_{4,4}$, and $b_\xi:=\txi\cdot b$ for each $\xi<\omega_1$. Observe that~$e$, $b$, and all~$b_\xi$ are idempotent, and that $1=e\oplus b$ and $b_\xi\utr b$ in~$S$. We set $U_\xi:=(e+b_\xi)S$, for each $\xi<\omega_1$, and
 \begin{align*}
 A'_\xi&:=\text{canonical copy of }\LL\bigl((R_{\xi+1})^{5\times 5}\bigr)
 \text{ in }\LL\bigl(R^{5\times 5}\bigr)\,,\\
 A_\xi&:=\text{ideal of }A'_\xi\text{ generated by }U_\xi\,,
 \end{align*}
for each $\xi<\omega_1$. In particular, $A'_\xi$ is a countable complemented sublattice of~$\LL(S)$ containing~$\psi$ while $A_\xi$ contains $\phi:=(\famm{e_{i,i}S}{0\leq i\leq 3},\famm{(e_{i,i}-e_{0,i})S}{1\leq i\leq 3})$, the canonical spanning $4$-frame of the principal ideal~$\LL(S)\dnw eS$.

In each~$A_\xi$, we interpret the constant~$\sa_i$ by~$e_{i,i}S$, for $0\leq i\leq 3$, and the constant~$\sc_i$ by~$(e_{i,i}-e_{0,i})S$, for $1\leq i\leq 3$. Furthermore, we interpret the predicate symbol~$\sI$ of~$\Gamma$ in each~$A'_\xi$ by $A'_\xi\dnw bS$, and in each~$A_\xi$ by~$A_\xi\dnw b_\xi S$. It is straightforward to verify that we thus obtain increasing $\omega_1$-chains~$\vec{A}$ and~$\vec{A}'$ of countable models in~$\cA$.

We claim that there is no $\LL(S)$-valued \Ban\ measure on $\setm{U_\xi}{\xi<\omega_1}$. Suppose otherwise. As $U_\xi=eS\oplus b_\xi S$ and $b_\xi S\subseteq bS$, with $eS\oplus bS=S$ in~$\LL(S)$, there exists, by Lemma~\ref{L:Transfer}, an $\bigl(\LL(S)\dnw bS\bigr)$-valued \Ban\ measure on $\setm{b_\xi S}{\xi<\omega_1}$. However, it follows from \cite[Lemma~10.2]{Jons62} that $\LL(S)\dnw bS$ is isomorphic to~$\LL(R)$, \emph{via} an isomorphism that sends~$b_\xi S$ to~$\txi R$, for each $\xi<\omega_1$. Thus there exists an $\LL(R)$-valued \Ban\ measure on $\setm{\txi R}{\xi<\omega_1}$. This contradicts Theorem~\ref{T:NoBanMeas}.

Any lifting of~$\vec{A}$, with respect to the functor~$\LL$, in~$\cB$ arises from an $\omega_1$-chain
 \[
 B_0\subset B_1\subset\cdots\subset B_\xi\subset\cdots
 \]
of regular rings, and it can be represented by the commutative diagram of Figure~\ref{Fig:LLLift}, for a system $\famm{\eps_\xi}{\xi<\omega_1}$ of isomorphisms.
\begin{figure}[htb]
 \[
 \def\labelstyle{\displaystyle}
 \xymatrix{
 A_0\ar@{^(->}[r] & A_1\ar@{^(->}[r] &\cdots&\cdots \ar@{^(->}[r]
 &A_\xi\ar@{^(->}[r] & \cdots\\
 \LL(B_0)\ar[u]^{\eps_0}_{\cong}\ar@{^(->}[r] &
 \LL(B_1)\ar[u]^{\eps_1}_{\cong}\ar@{^(->}[r] &\cdots&\cdots\ar@{^(->}[r]
 &\LL(B_\xi)\ar[u]^{\eps_\xi}_{\cong}\ar@{^(->}[r] & \cdots
 }
 \]
\caption{A lifting of~$\Phi\vec{A}$ with respect to~$\LL$}
\label{Fig:LLLift}
\end{figure}
It follows from Lemma~\ref{L:L(R)unital} that~$B_\xi$ is unital, for each $\xi<\omega_1$. Denote by $1_\xi$ the unit of~$B_\xi$, and set
 \[
 U_\beta\ominus U_\alpha:=\eps_\beta\bigl((1_\beta-1_\alpha)
 \cdot B_\beta\bigr)\,,\quad\text{for all }\alpha\leq\beta<\omega_1\,.
 \]
Let $\alpha\leq\beta\leq\gamma<\omega_1$.
{}From the commutativity of the diagram in Figure~\ref{Fig:LLLift} it follows that $U_\alpha=\eps_\beta(1_\alpha\cdot B_\beta)$. Hence, by applying the lattice isomorphism~$\eps_\beta$ to the relation $B_\beta=1_\alpha\cdot B_\beta\oplus(1_\beta-1_\alpha)\cdot B_\beta$, we obtain the relation $U_\beta=U_\alpha\oplus(U_\beta\ominus U_\alpha)$. Furthermore, from $1_\alpha\utr 1_\beta\utr 1_\gamma$ it follows that $1_\gamma-1_\alpha=(1_\gamma-1_\beta)\oplus(1_\beta-1_\alpha)$ in $\Idemp B_\gamma$, thus $(1_\gamma-1_\alpha)\cdot B_\gamma=(1_\gamma-1_\beta)\cdot B_\gamma\oplus(1_\beta-1_\alpha)\cdot B_\gamma$ in~$\LL(B_\gamma)$, thus, applying~$\eps_\gamma$ to each side of that relation, we obtain
 \begin{align*}
 U_\gamma\ominus U_\alpha&=(U_\gamma\ominus U_\beta)\oplus
 \eps_\gamma\bigl((1_\beta-1_\alpha)\cdot B_\gamma\bigr)\\
 &=(U_\gamma\ominus U_\beta)\oplus
 \eps_\beta\bigl((1_\beta-1_\alpha)\cdot B_\beta\bigr)
 &&(\text{see Figure~\ref{Fig:LLLift}})\\
 &=(U_\gamma\ominus U_\beta)\oplus(U_\beta\ominus U_\alpha)\,.
 \end{align*}
Therefore, $\ominus$ defines an $\LL(S)$-valued \Ban\ measure on $\setm{U_\xi}{\xi<\omega_1}$, which we just proved impossible.
\end{proof}

Observe that all the $A'_\xi$s share the same unit, while the $\omega_1$-sequence formed with all the units of the~$A_\xi$s is increasing.

\begin{theorem}\label{T:4/5NonCoord}
There exists a non-coordinatizable, $4/5$-entire \scml\ $L$ of cardinality~$\aleph_1$, which is in addition isomorphic to an ideal in a \cml~$L'$ with a spanning $5$-frame \pup{so~$L'$ is coordinatizable}.
\end{theorem}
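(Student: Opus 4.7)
The strategy is to feed the diagram counterexample of Lemma~\ref{L:NonLLLiftDiagr} into the Condensate Lifting Lemma (CLL) of~\cite{GiWe}, so as to convert \emph{diagram} non-liftability into \emph{object} non-coordinatizability. All the categorical bookkeeping is already in place: Corollary~\ref{C:Larder} provides the projectable $\aleph_1$-larder $(\cA,\cB,\cS,\cA^\dagger,\cB^\dagger,\cS^\Rightarrow,\Phi,\LL)$, and Lemma~\ref{L:NonLLLiftDiagr}(i) says that $\Phi\vec{A}$ admits no lifting in~$\cB$ with respect to~$\LL$. I fix an $\aleph_1$-lifter $X$ for~$\omega_1$ of cardinality~$\aleph_1$, as supplied by~\cite[Proposition~4.6]{Gill07}, and form inside~$\cA$ the two condensates $L:=\mathbf{F}(X)\otimes\vec{A}$ and $L':=\mathbf{F}(X)\otimes\vec{A}'$; this is legitimate because~$\cA$ is closed under directed colimits and finite products, as noted just before Lemma~\ref{L:LeftLard}. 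CLL (Lemma~3-4.2 of~\cite{GiWe}, applied with $\lambda=\mu=\aleph_1$) then guarantees that no regular ring~$B$ satisfies $\LL(B)\cong\Phi(L)$, so~$L$ is non-coordinatizable. Since $L\in\cA$ satisfies the axiom system~$\cT$, it is $4/5$-entire, hence carries a large $4$-frame, and $|L|=\aleph_1$ follows from the size of~$X$ together with the countability of the factors~$A_\xi$.

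For the coordinatizable overlattice I exploit Lemma~\ref{L:NonLLLiftDiagr}(iii): the members of~$\vec{A}'$ share a common spanning $5$-frame fixed by every transition map. Because~$L'$ is built from the $A'_\xi$'s by directed colimits of finite products, this common frame descends to a spanning $5$-frame of~$L'$, and so, by von~Neumann's Coordinatization Theorem, $L'$ is coordinatizable. The inclusions $A_\xi\hookrightarrow A'_\xi$ of Lemma~\ref{L:NonLLLiftDiagr}(ii) form a natural transformation $\vec{A}\to\vec{A}'$, which functoriality of $\mathbf{F}(X)\otimes(-)$ turns into a $0$-lattice embedding $L\hookrightarrow L'$; since each~$A_\xi$ is a principal ideal of the corresponding~$A'_\xi$, a direct compatibility check then identifies the image of~$L$ with an ideal of~$L'$.

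I expect the conceptual content to be entirely encapsulated inside CLL itself, and simply invoked from~\cite{GiWe}. The only genuine work left for this proof is the bookkeeping verification that the two structural features singled out above, namely the shared spanning $5$-frame of the $A'_\xi$'s and the diagramwise principal-ideal inclusion $A_\xi\hookrightarrow A'_\xi$, really do survive passage to the condensate. Both reductions come from unpacking the definition of $\mathbf{F}(X)\otimes(-)$ as a directed colimit of finite products of the~$A_\xi$'s (respectively~$A'_\xi$'s) recorded in Chapter~2 of~\cite{GiWe}, and from the fact that ``being a spanning $n$-frame'' and ``being an ideal generated by a single element'' are preserved by such constructions and by the natural $0$-lattice homomorphisms involved.
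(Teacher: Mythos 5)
Your proposal follows essentially the same route as the paper: fix an $\aleph_1$-lifter of $\omega_1$ via \cite[Proposition~4.6]{Gill07}, form the condensates $\xF(X)\otimes\vec{A}$ and $\xF(X)\otimes\vec{A}'$, invoke CLL together with Corollary~\ref{C:Larder} to turn a hypothetical coordinatization of $L$ into a lifting of $\Phi\vec{A}$ contradicting Lemma~\ref{L:NonLLLiftDiagr}, and transfer the spanning $5$-frame and the ideal inclusion to the colimit exactly as the paper does. The only small imprecision is the cardinality claim: the size of $X$ and the countability of the $A_\xi$ only give $\card L\leq\aleph_1$, and the lower bound (uncountability) is obtained in the paper from the non-coordinatizability of $L$ together with \cite[Theorem~10.3]{Jons62}, since a countable \scml\ with a large $4$-frame would be coordinatizable.
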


\begin{proof}
We use the notation and terminology of Gillibert and Wehrung~\cite{GiWe}. It follows from Gillibert~\cite[Proposition~4.6]{Gill07} that there exists an $\aleph_1$-lifter $(X,\bdX)$ of the chain~$\omega_1$ such that $\card X=\aleph_1$.

Consider the diagrams~$\vec{A}$ and~$\vec{A}'$ of Lemma~\ref{L:NonLLLiftDiagr}, and observe that both~$A_\xi$ and~$A'_\xi$ belong to~$\cA^\dagger$ (i.e., they are countable), for each~$\xi<\omega_1$. We form the \emph{condensates}
 \[
 L:=\Phi\bigl(\xF(X)\otimes\vec{A}\bigr)\quad\text{and}\quad
 L':=\Phi\bigl(\xF(X)\otimes\vec{A}'\bigr)\,.
 \]
{}From $\card X\leq\aleph_1$ it follows that the $\omega_1$-scaled Boolean algebra~$\xF(X)$ is the directed colimit of a direct system of at most~$\aleph_1$ finitely presented objects in the category $\mathbf{Bool}_{\omega_1}$. It follows that $\card L\leq\aleph_1$ and $\card L'\leq\aleph_1$. We shall prove that~$L$ is not coordinatizable; in particular, by \cite[Theorem~10.3]{Jons62}, $\card L=\aleph_1$.

Suppose that there exists an isomorphism~$\chi\colon\LL(B)\to L$, for some regular ring~$B$. By CLL (cf. \cite[Lemma~3-4.2]{GiWe}) together with Corollary~\ref{C:Larder}, there exists an $\omega_1$-indexed diagram~$\vec{B}$ in~$\cB$ such that $\LL\vec{B}\cong\Phi\vec{A}$. This contradicts Lemma~\ref{L:NonLLLiftDiagr}. Therefore, $L$ is not coordinatizable.

Furthermore, $\xF(X)\otimes\vec{A}$ is a direct limit of finite direct products of the form $\prod_{i=1}^nA_{\xi_i}$, where the shape of the indexing system depends only on~$X$. As~$A_\xi$ is an ideal of~$A'_\xi$ for each $\xi<\omega_1$, $\prod_{i=1}^nA_{\xi_i}$ is an ideal of $\prod_{i=1}^nA'_{\xi_i}$ at each of those places. Therefore, taking direct limits, we obtain that $\xF(X)\otimes\vec{A}$ is isomorphic to an ideal of $\xF(X)\otimes\vec{A}'$, so~$L$ is an ideal of~$L'$. As the class of all lattices with a spanning $5$-frame is closed under finite products and directed colimits and as all~$A'_\xi$s have a spanning $5$-frame, $L'$ also has a spanning $5$-frame.
\end{proof}

Theorem~\ref{T:4/5NonCoord} provides us with a non-coordinatizable ideal in a coordinatizable \cml\ of cardinality~$\aleph_1$. We do not know whether an ideal in a \emph{countable} coordinatizable \scml\ is coordinatizable.

As the lattice~$L$ of Theorem~\ref{T:4/5NonCoord} is $4/5$-entire and sectionally complemented, it has a large $4$-frame. Hence it solves negatively the problem, left open in J\'onsson~\cite{Jons62}, whether a \scml\ with a large $4$-frame is coordinatizable.

\begin{remark}\label{Rk:PpalIdl4fr}
As the lattice~$L$ of Theorem~\ref{T:4/5NonCoord} has a large $4$-frame, every principal ideal of~$L$ is coordinatizable. Indeed, fix a large $4$-frame $\alpha=(a_0,a_1,a_2,a_3,c_1,c_2,c_3)$ in~$L$ and put $a:=\bigoplus_{i=0}^3a_i$. Every principal ideal~$I$ of~$L$ is contained in~$L\dnw b$ for some~$b\in L$ such that $a\leq b$. As~$\alpha$ is a large $4$-frame of the \cml~$L\dnw b$ and by \cite[Theorem~8.2]{Jons60}, $L\dnw b$ is coordinatizable. As~$I$ is a principal ideal of~$L\dnw b$, it is also coordinatizable (cf. \cite[Lemma~10.2]{Jons62}). 
\end{remark}

\begin{remark}\label{Rk:ClosIssues}
It is proved in Wehrung~\cite{CXCoord} that the union of a chain of coordinatizable lattices may not be coordinatizable. The lattices considered there are $2$-distributive with unit. Theorem~\ref{T:4/5NonCoord} extends this negative result to lattices (without unit) with a large $4$-frame. Furthermore, it also shows that an ideal in a coordinatizable lattice~$L'$  may not be coordinatizable, even in case~$L'$ has a spanning $5$-frame. By contrast, it follows from \cite[Lemma~10.2]{Jons62} that any \emph{principal} ideal of a coordinatizable lattice is coordinatizable. It is also observed in \cite[Proposition~3.5]{CXCoord} that the class of coordinatizable lattices is closed under homomorphic images, reduced products, and taking neutral ideals.
\end{remark}

It is proved in Wehrung~\cite{CXCoord} that the class of all coordinatizable lattices \emph{with unit} is not first-order. The lattices considered there are $2$-distributive (thus without non-trivial homogeneous sequences) with unit. The following result extends this negative result to the class of all lattices (without unit) admitting a large $4$-frame.

\begin{corollary}\label{C:4/5NonCoord1}
The class of all coordinatizable \scml s with a large $4$-frame is not first-order definable.
\end{corollary}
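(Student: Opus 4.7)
The plan is to combine the non-coordinatizable lattice~$L$ of Theorem~\ref{T:4/5NonCoord} with J\'onsson's 1962 coordinatization result for countable \scml s carrying a large $4$-frame, via the downward L\"owenheim--Skolem theorem. The key difficulty is that ``having a large $4$-frame'' is not obviously first-order in the bare lattice language, since the largeness of~$\sa_0$ refers to membership in the neutral ideal generated by~$\sa_0$, which \emph{a priori} involves an unbounded iteration. I would sidestep this by working throughout in the richer similarity type~$\Gamma$ and the first-order theory~$\cT$ already introduced in Section~\ref{S:NonCoord}, inside which $4/5$-entirety is axiomatized by a finite list of $\forall\exists$-sentences.

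First I would expand~$L$ to a $\Gamma$-structure by interpreting $\sa_0,\dots,\sc_3$ as a large $4$-frame of~$L$ and~$\sI$ as an ideal witnessing $4/5$-entirety; such data exist because~$L$ is $4/5$-entire by Theorem~\ref{T:4/5NonCoord}, and the resulting expansion is a model of~$\cT$. By the downward L\"owenheim--Skolem theorem there is then a countable elementary $\Gamma$-substructure $K\preceq L$. Since~$\cT$ is first-order, $K$ is again a model of~$\cT$, hence a countable $4/5$-entire \scml\ carrying a large $4$-frame. J\'onsson's theorem from~\cite{Jons62} for the countable case now applies and shows that~$K$ is coordinatizable.

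Finally, suppose for contradiction that the class~$\cC$ of coordinatizable \scml s with a large $4$-frame were first-order definable in the language of lattices with zero, by some set~$\Sigma$ of sentences. Since~$K$ lies in~$\cC$, we have $K\models\Sigma$; and because $K\preceq L$ as $\Gamma$-structures forces $K\equiv L$ in the (smaller) language of lattices with zero, we obtain $L\models\Sigma$, hence $L\in\cC$, contradicting the non-coordinatizability of~$L$ supplied by Theorem~\ref{T:4/5NonCoord}. The main obstacle, as indicated, is the initial choice of language: the L\"owenheim--Skolem reduction must be carried out in~$\Gamma$ rather than in the plain lattice language, precisely so that the large $4$-frame survives passage to the countable elementary substructure~$K$.
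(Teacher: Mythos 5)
Your proposal is correct and takes essentially the same route as the paper: pass to a countable elementary substructure in which the large $4$-frame survives, apply J\'onsson's coordinatization theorem for countable \scml s with a large $4$-frame, and contrast with the non-coordinatizability of~$L$ from Theorem~\ref{T:4/5NonCoord}. The only difference is bookkeeping: the paper stays in the plain lattice language, keeping the seven frame entries as parameters and using the single parameterized sentence~\eqref{Eq:weak4/5} to transfer largeness to the countable elementary sublattice, whereas you expand to the similarity type~$\Gamma$ and the theory~$\cT$ before applying L\"owenheim--Skolem and then reduce back to the lattice language.
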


\begin{proof}
Fix a large $4$-frame $\alpha=\bigl((a_0,a_1,a_2,a_3),(c_1,c_2,c_3)\bigr)$ in the lattice~$L$ of Theorem~\ref{T:4/5NonCoord}, and put $a:=a_0\oplus a_1\oplus a_2\oplus a_3$. As~$L$ is $4/5$-entire, it satisfies the first-order statement, with parameters from $\set{a_0,a}$,
 \begin{equation}\label{Eq:weak4/5}
 (\forall\sx)(\exists\sy)(\sx\leq a\oplus\sy\text{ and }
 \sy\lesssim a_0)\,.
 \end{equation}
Let~$K$ be a countable elementary sublattice of~$L$ containing all the seven entries of~$\alpha$. As~$L$ satisfies~\eqref{Eq:weak4/5}, so does~$K$, thus~$\alpha$ is a large~$4$-frame in~$K$. It follows from \cite[Theorem~10.3]{Jons62} that~$K$ is coordinatizable. On the other hand, $L$ is not coordinatizable and~$K$ is an elementary sublattice of~$L$.
\end{proof}

The following definition is introduced in~\cite[Definition~5.1]{BanFct1}.

\begin{definition}\label{D:BanTail}
A \emph{\Ban\ trace} on a lattice~$L$ with zero is a family\linebreak $\famm{a_i^j}{i\leq j\text{ in }\Lambda}$ of elements in~$L$, where~$\Lambda$ is an upward directed partially ordered set with zero, such that
\begin{enumerate}
\item $a_i^k=a_i^j\oplus a_j^k$ for all $i\leq j\leq k$ in~$\Lambda$;

\item $\setm{a_0^i}{i\in\Lambda}$ is cofinal in~$L$.
\end{enumerate}
\end{definition}

We proved in~\cite[Theorem~6.6]{BanFct1} that \emph{A \scml\ with a large $4$-frame is coordinatizable if{f} it has a \Ban\ trace}. Hence we obtain the following result.

\begin{corollary}\label{C:NoBanTrace}
There exists a $4/5$-entire \scml\ of cardinality~$\aleph_1$ without a \Ban\ trace.
\end{corollary}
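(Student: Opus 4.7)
The plan is to combine Theorem~\ref{T:4/5NonCoord} with the characterization of coordinatizability via \Ban\ traces recalled just before the statement. Specifically, take~$L$ to be the lattice produced by Theorem~\ref{T:4/5NonCoord}: it is a \scml\ of cardinality~$\aleph_1$ which is $4/5$-entire and not coordinatizable.

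By Definition~\ref{D:n/mentire} together with the observation following it, the $4/5$-entirety of~$L$ implies that~$L$ has a large $4$-frame. Thus~$L$ falls under the hypotheses of the coordinatizability criterion \cite[Theorem~6.6]{BanFct1} quoted above: a \scml\ with a large $4$-frame is coordinatizable if and only if it admits a \Ban\ trace. Since~$L$ is not coordinatizable, the ``only if'' direction of that criterion forces~$L$ to have no \Ban\ trace, which is exactly what the corollary asserts.

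There is no real obstacle here; the work has already been done in Theorem~\ref{T:4/5NonCoord} (the existence of the non-coordinatizable $4/5$-entire lattice of size~$\aleph_1$) and in~\cite[Theorem~6.6]{BanFct1} (the trace/coordinatization equivalence). The proof is essentially a one-line deduction citing these two facts.
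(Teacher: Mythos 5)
Your proposal is correct and matches the paper's own reasoning: the corollary is obtained exactly by applying the equivalence of \cite[Theorem~6.6]{BanFct1} to the non-coordinatizable $4/5$-entire lattice of Theorem~\ref{T:4/5NonCoord}, which has a large $4$-frame by the remark following Definition~\ref{D:n/mentire}. Nothing further is needed.
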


\section{Acknowledgment}\label{S:Acknow}
I thank Luca Giudici for his many thoughtful and inspiring comments on the paper, in particular for his example quoted in Remark~\ref{Rk:IncromCh}.


\begin{thebibliography}{99}
\bibitem{Bana}
B. Banaschewski,
Totalgeordnete Moduln (German),
Arch. Math.~\textbf{7} (1957), 430--440.

\bibitem{Birk94}
G. Birkhoff,
Lattice Theory, Corrected reprint of the 1967 third edition. American Mathematical Society Colloquium Publications, \textbf{25}. American Mathematical Society, Providence, R.I., 1979.

\bibitem{Bkou70}
R. Bkouche,
Puret\'e, mollesse et paracompacit\'e,
C. R. Acad. Sci. Paris S\'er. A-B~\textbf{270} (1970), A1653--A1655.

\bibitem{BuSa}
S. Burris and H.\,P. Sankappanavar,
A Course in Universal Algebra, The Millennium Edition, online
manuscript available at \texttt{http://www.thoralf.uwaterloo.ca}\,,
1999. xvi+315~p. (Previously published as: Graduate Texts in
Mathematics, \textbf{78}. New York, Heidelberg, Berlin:
Springer-Verlag.)

\bibitem{FiRo70}
R.\,L. Finney and J. Rotman,
Paracompactness of locally compact Hausdorff spaces,
Michigan Math. J.~\textbf{17}, no.~4 (1970), 359--361.

\bibitem{FrHa54}
K.\,D. Fryer and I. Halperin,
Coordinates in geometry,
Trans. Roy. Soc. Canada. Sect. III. (3)~\textbf{48} (1954), 11--26.

\bibitem{FrHa56}
K.\,D. Fryer and I. Halperin,
The von~Neumann coordinatization theorem for complemented
modular lattices, Acta Sci. Math. (Szeged) \textbf{17} (1956),
203--249.

\bibitem{Gill07}
P. Gillibert,
Critical points of pairs of varieties of algebras, Internat. J. Algebra Comput.~\textbf{19}, no.~1 (2009), 1--40.

\bibitem{GiWe}
P. Gillibert and F. Wehrung,
{}From objects to diagrams for ranges of functors, preprint 2010, arXiv:1003.4850.

\bibitem{Good91}
K.\,R. Goodearl,
Von Neumann Regular Rings,
Second edition.  Robert E. Krieger Publishing Co., Inc., Malabar, FL, 1991.

\bibitem{GoMM}
K.\,R. Goodearl, P. Menal, and J. Moncasi,
Free and residually Artinian regular rings, J. Algebra~\textbf{156} (1993), 407--432.

\bibitem{GLT2}
G. Gr\"atzer,
General Lattice Theory, second edition. Birkh\"auser Verlag, Basel,
1998.

\bibitem{Halp61}
I. Halperin,
A simplified proof of von Neumann's coordinatization theorem,
Proc. Nat. Acad. Sci. U.S.A.~\textbf{47} (1961), 1495--1498.

\bibitem{Halp81}
I. Halperin,
von Neumann's coordinatization theorem,
C. R. Math. Rep. Acad. Sci. Canada~\textbf{3}, no.~5 (1981), 285--290.

\bibitem{Halp83}
I. Halperin,
von Neumann's coordinatization theorem, 
Acta Sci. Math. (Szeged)~\textbf{45} (1983), no.~1-4, 213--218.

\bibitem{Herr}
C. Herrmann,
Generators for complemented modular lattices and  
the von Neumann-J\'{o}nsson Coordinatization Theorems, Algebra Universalis~\textbf{63}, no.~1 (2010), 45--64.

\bibitem{HeSe}
C. Herrmann and M. Semenova,
Existence varieties of regular rings and complemented modular lattices, J. Algebra~\textbf{314}, no.~1 (2007), 235--251.

\bibitem{Jech}
T. Jech,
Set Theory, Pure and Applied Mathematics, Academic Press [Harcourt Brace Jovanovich, Publishers], New York - London, 1978.

\bibitem{Jons60}
B. J\'onsson,
Representations of complemented modular lattices,
Trans. Amer. Math. Soc.~\textbf{60} (1960), 64--94.

\bibitem{Jons62}
B. J\'onsson,
Representations of relatively complemented modular lattices,
Trans. Amer. Math. Soc.~\textbf{103} (1962), 272--303.

\bibitem{Kapl58}
I. Kaplansky,
On the dimension of modules and algebras, X. A right hereditary ring which is not left hereditary, Nagoya Math. J.~\textbf{13} (1958), 85--88.

\bibitem{Maed58}
F. Maeda,
Kontinuierliche Geometrien (German),
Die Grundlehren der mathematischen Wissenschaften in Einzeldarstellungen mit besonderer Ber\"ucksichtigung der Anwendungsgebiete, Bd.~\textbf{95}. Springer-Verlag, Berlin - G\"ottingen - Heidelberg, 1958. x+244~p.

\bibitem{Malc}
A.\,I. Mal'cev,
Algebraic Systems (Algebraicheskie sistemy) (Russian)
Sovremennaja Algebra. Moskau: Verlag ``Nauka'', Hauptredaktion
f\"ur physikalisch-mathematische Literatur, 1970.
English translation: Die Grundlehren der mathematischen Wissenschaften. Band \textbf{192}. Berlin - Heidelberg - New York: Springer-Verlag; Berlin: Akademie-Verlag, 1973.

\bibitem{Micol}
F.~Micol,
On representability of $\ast$-regular rings and modular ortholattices,
PhD thesis, TU Darmstadt, January 2003. Available online at\newline \texttt{http://elib.tu-darmstadt.de/diss/000303/diss.pdf}\,.

\bibitem{Neum}
J. von Neumann,
Continuous geometry, Princeton Mathematical Series, No.~\textbf{25}. Princeton University Press, Princeton, N.J. 1960.

\bibitem{SaSo}
M. Saarim\"aki and P. Sorjonen,
On Banaschewski functions in lattices, Algebra Universalis \textbf{28}, no.~1 (1991), 103--118.

\bibitem{CXCoord}
F. Wehrung,
Von Neumann coordinatization is not first-order, J. Math. Log.~\textbf{6}, no.~1 (2006), 1--24.

\bibitem{BanFct1}
F. Wehrung,
Coordinatization of lattices by regular rings without unit and Banaschewski functions, Algebra Universalis, to appear.

\end{thebibliography}
\end{document}